\documentclass[11pt]{article}
\usepackage{amssymb,amsfonts,amsmath,amsthm,cite, color}
\usepackage{epsfig}
\parskip=8pt
\hoffset -25truemm \oddsidemargin=25truemm \evensidemargin=25truemm
\textwidth=155truemm \voffset -25truemm \topmargin=25truemm
\headheight=7truemm \headsep=0truemm \textheight=225truemm
\baselineskip=16pt
\newtheorem{thm}{Theorem}[section]
\newtheorem{cor}{Corollary}
\newtheorem{lem}[thm]{Lemma}
\newtheorem{prop}[thm]{Proposition}

\makeatletter \@addtoreset{equation}{section}

\def\qed{\hfill \rule{4pt}{7pt}}
\def\pf{\noindent{\it Proof.\ }}
\newcommand{\@mathaccent}[2]{\newcommand{#1}[1]{\mathop{##1}\limits^{#2}}}%

\title{\bf\Large An Operator Approach to the Al-Salam-Carlitz  Polynomials}
\author{William Y. C. Chen$^{1}$,
Husam L. Saad$^{2}$, and Lisa H. Sun$^{3}$}
\date{$^{1,3}$Center for Combinatorics, LPMC-TJKLC\\
 Nankai University\\
Tianjin 300071, P.R. China\\
\vskip 0.2 cm $^2$Department of Mathematics, College of Science,\\
Basrah University\\  Basrah, Iraq\vskip 0.2 cm $^1$chen@nankai.edu.cn,
$^2$hus6274@hotmail.com, $^3$sunhui@nankai.edu.cn}

\begin{document}
\maketitle
\noindent \textbf{Abstract.} We present an operator approach to
Rogers-type formulas and Mehler's formulas for the Al-Salam-Carlitz
polynomials $U_n(x,y,a;q)$. By using the $q$-exponential operator,
we obtain a Rogers-type formula which leads to a linearization
formula. With the aid of a bivariate augmentation operator, we get a
simple derivation of  Mehler's formula due to by Al-Salam and
Carlitz, which requires  a terminating condition
 on a ${}_3\phi_2$ series. By means of the
Cauchy companion augmentation operator, we obtain Mehler's formula in a
similar form, but it does not need the terminating condition. We also give several identities
on the generating functions for products of the Al-Salam-Carlitz
polynomials which are extensions of formulas for Rogers-Szeg\"o
polynomials.

\noindent \textbf{Keywords:} Al-Salam-Carlitz  polynomial, the
$q$-exponential operator, the homogeneous $q$-shift operator, the
Cauchy companion operator, the Rogers-type formula, Mehler's formula

\noindent \textbf{AMS Classification:} 33D45, 05A30

\section{Introduction}\label{sec1}

The Al-Salam-Carlitz polynomials are $q$-orthogonal polynomials
which arise in many applications such as the $q$-harmonic
oscillator, theta functions, quantum groups and coding theory; see
for example \cite{asksus93,akkwil85,gasrah,kim97}.  This paper presents
an operator approach to the Rogers-type formulas and
Mehler's formulas for the Al-Salam-Carlitz polynomials. These
 polynomials are a  generalization of the classical
Rogers-Szeg\"o polynomials which have been extensively studied, see
for example \cite{AtNa94,cao,car72,CG,chenss07}. There are two
classical formulas concerning the Rogers-Szeg\"o polynomials,
namely, Mehler's formula and the Rogers formula, in connection with
the Poisson kernel formula and the linearization formula. 

It is
natural to study the Rogers-type formulas and Mehler's formulas
beyond the Rogers-Szeg\"o polynomials. In fact,  Mehler's formula
for the Al-Salam-Carlitz polynomials has been derived by Al-Salam
and Carlitz \cite{alcar65}, which requires a terminating condition on
a $_3\phi_2$ series as mentioned by   Askey and Suslov
\cite{asksus93}. Using our operator approach, we deduce a new
formula  in a similar form, but it does not involve the terminating
condition. We also derive some Rogers-type formulas, one of which
leads to a linearization formula. In addition, we obtain several
identities on the generating functions of  products of the Al-Salam-Carlitz polynomials as
extensions of the formulas for the Rogers-Szeg\"{o} polynomials.

 We adopt the common notation on $q$-series
in Gasper and Rahman \cite{gasrah}. The set of integers is denoted
by $\mathbb{Z}$. Throughout this paper, $q$ is a fixed nonzero
complex number with $|q| < 1$. The $q$-shifted factorial is defined
for any complex parameter $a$ by
\begin{equation*}
(a;q)_\infty=\prod_{k=0}^\infty (1-aq^k)\quad \mbox{and} \quad
(a;q)_{n}=\frac{(a;q)_\infty}{(aq^n;q)_\infty},\quad n\in
\mathbb{Z}.
\end{equation*}
We shall  use the compact notation
\begin{align*}
 (a_{1}, a_{2}, \ldots, a_{m};q)_{n}&=(a_{1};q)_{n}(a_{2};q)_{n}
 \cdots(a_{m};q)_{n}, \quad \mbox{for}\  n\in
\mathbb{Z}\ \mbox{or} \  n=\infty.
\end{align*}
The $q$-binomial coefficient is given by
\[
{n \brack k}=\frac{(q;q)_{n}}{(q;q)_{n-k}(q;q)_{k}}. \] The basic
hypergeometric series $_{r}\phi_{s}$ are defined as follows,
\begin{align}\label{gHei}
{}_{r}\phi_{s} \left[\begin{array}{c}
  a_{1}, a_{2},
\ldots, a_{r}\\
 b_{1}, b_{2}, \ldots,
b_{s}\\
  \end{array};q, x \right]=\ \sum_{n=0}^{\infty}\frac{(a_{1},a_{2},\ldots,a_{r};q)_{n}}
  {(q,b_{1},b_2,\ldots,b_{s};q)_{n}}\ \left[(-1)^{n}q^{n\choose 2}\right]^{1+s-r}
  x^{n}.
\end{align}

This paper is primarily concerned with the Al-Salam-Carlitz
polynomials $U_n(x, y, a;q)$ which can be defined in terms of a
$_2\phi_1$ series
\begin{equation}\label{mbigqjac}
    U_n(x,y,a;q)=(-a)^nq^{n
\choose 2} {}_{2}\phi_1\left(\begin{array}{c}
q^{-n}, y/x\\
0
\end{array};q,\frac{qx}{a}\right).
\end{equation}
The following generating function for the the Al-Salam-Carlitz polynomials has been
given by Al-Salam and Carlitz \cite{alcar65},
\begin{align}\label{gf-mbjp}
\sum_{n=0}^\infty  U_n(x,y,a;q)
\frac{t^n}{(q;q)_n}=\frac{(at,yt;q)_\infty}{(xt;q)_\infty},
\end{align}
where $|xt|<1$. Since that the right-hand side of (\ref{gf-mbjp}) is symmetric in $a$ and $y$, the
polynomials $U_n(x,y,a;q)$ are symmetric in $a$ and $y$, that is,
\begin{equation}\label{uxy}
U_n(x,y,a;q)=U_n(x,a,y;q).
\end{equation}
This symmetry property will be used later.

In terms of  of the Cauchy polynomials
\[
P_n(x,y)=(x-y)(x-qy)\cdots(x-q^{n-1}y), \] with the generating
function
\begin{equation}\label{gf-pn}
\sum_{n=0}^\infty P_n(x,y)
\frac{t^n}{(q;q)_n}=\frac{(yt;q)_\infty}{(xt;q)_\infty}, \quad
|xt|<1,
\end{equation}
the Al-Salam-Carlitz polynomials can be expressed as
\begin{align}\label{mbqjp}
U_n(x,y,a;q)=\sum_{k=0}^n {n\brack k} (-1)^k q^{k\choose 2} a^k
P_{n-k}(x,y).
\end{align}
The above definition is
essentially the same as the original definition of the
Al-Salam-Carlitz polynomials $u_n^{(a)}(x;q)$,
\begin{align}\label{def-un}
u_n^{(a)}(x;q)&=(-a)^nq^{n \choose 2}
{}_{2}\phi_1\left(\begin{array}{c}
q^{-n}, x^{-1}\\
0
\end{array};q,\frac{qx}{a}\right).
\end{align}
Clearly, we have the following relation
\begin{align}
U_n(x,y,a;q)&=y^nu_n^{(a/y)}(x/y;q).\label{rebu}
\end{align}

The Al-Salam-Carlitz  polynomials are related to several $q$-orthogonal polynomials, such as the
$q$-Bessel polynomials $B_n(x,b;q)$ due to Abdi \cite{abdi65}, and
 the Stieltjes-Wigert polynomials $S_n(x;q)$
\cite[p.\,116]{koeswa94}. In particular, the Al-Salam-Carlitz
polynomials are connected to the bivariate Rogers-Szeg\"{o}
polynomials \cite{chenfuzhang}
$$
h_n(x,y|q)=\sum_{k=0}^n {n\brack k}P_k(x,y), $$ which have  the
generating function
\begin{equation}\label{gf-hxy}
\sum_{n=0}^{\infty}h_n(x,y|q)\frac{t^n}{(q;q)_n}=
\frac{(yt;q)_\infty}{(t,xt;q)_\infty}, \quad |t|<1, |xt|<1.
\end{equation}

On the other hand, although the Al-Salam-Carlitz polynomials can be expressed
in terms of the bivariate Rogers-Szeg\"o polynomials
\begin{equation}\label{reluh}
U_n(x,y,a;q)=(-1)^n q^{n\choose 2}a^n
h_n\Big(\frac{y}{a},\frac{x}{a}\Big|q^{-1}\Big),
\end{equation}
 as noted by Carlitz \cite{car72}, it is often happens that
an infinite $q$-series identity
no longer holds when $q$ is replaced by $q^{-1}$. In
fact, it turns out to be the case for the Rogers formula and
Mehler's formula for the polynomials $h_n(x,y|q)$. This suggests
that there is a need for a direct approach to deal with the
Al-Salam-Carlitz polynomials, and it is our hope   to
serve this purpose.

This paper is organized as follows.  In Section~2, we give an
overview of the $q$-exponential operator $T(bD_{q})$, and derive a
Rogers-type formula for $U_n(x,y,a;q)$ which leads to a
linearization formula. In Section~3, we construct a homogeneous
$q$-shift operator $\mathbb{F}(aD_{xy})$ and apply it to derive  Mehler's formula under the terminating condition. In Section~4, we
make use of
 the Cauchy companion operator to obtain two Rogers-type formulas and
  Mehler's formula without the terminating condition. In the last
 section,
 we provide four generating function identities
 for products of $U_n(x,y,a;q)$.

\section{A Rogers-Type Formula}

In this section, we give a Rogers-type formula for the
Al-Salam-Carlitz  polynomials $U_n(x,y,a;q)$ by using the
$q$-exponential operator $T(bD_{q})$. As a consequence, we obtain a linearization
formula for $U_n(x,y,a;q)$.

The $q$-differential operator, or the $q$-derivative, acting
on the variable $a$, is defined by
$$
D_q\{f(a)\}=\frac{f(a)-f(aq)}{a}.
$$
 The $q$-shift operator, denoted by $\eta$,  is defined by
\[
\eta\{f(a)\}=f(aq) \quad \mbox{and}\quad
\eta^{-1}\{f(a)\}=f(aq^{-1}), \] see, for example, \cite{andrews71,Roman85}. The operator  $\theta$ is defined by 
\begin{equation}\label{theta}
\theta=\eta^{-1} D_q, 
\end{equation} see Roman \cite{Roman85}.
Recall the  $q$-Leibniz rule for $D_q$, see \cite{Roman85}, 
\[
D_q^n \{f(a)g(a)\}=\sum_{k=0}^n {n\brack k}q^{k(k-n)}
D_q^k\{f(a)\}D_q^{n-k} \{g(q^ka)\}.
\]
 By convention, $D_q^0$ is understood as the identity, that is,
$D_q^0 \{f(a)\}=f(a)$.
Chen and Liu
\cite{chenliuII97} introduced the following two $q$-exponential operators
$$
T(bD_q)=\sum_{n=0}^\infty \frac{(bD_q)^n}{(q;q)_n} \quad \mbox{and}
\quad E(b\theta)=\sum_{n=0}^\infty \frac{q^{n\choose
2}(b\theta)^n}{(q;q)_n}
$$
for proving basic hypergeometric identities from their special cases. This method is
called parameter augmentation. The following lemma for the
$q$-exponential operator $T(bD_q)$  is easy to verify.

\begin{lem}
 We have
\begin{equation}\label{tbdq}
T(bD_q) \{a^n\}=\sum_{k=0}^n {n\brack k}b^ka^{n-k}.
\end{equation}
\end{lem}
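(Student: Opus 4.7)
The plan is to expand $T(bD_q)$ as a formal power series and reduce everything to computing $D_q^k\{a^n\}$, which is a routine iteration of the defining relation of the $q$-derivative.

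First I would write
\[
T(bD_q)\{a^n\} \;=\; \sum_{k=0}^\infty \frac{b^k}{(q;q)_k}\, D_q^k\{a^n\},
\]
so the entire problem is reduced to evaluating $D_q^k\{a^n\}$. From the definition $D_q\{f(a)\} = (f(a)-f(aq))/a$ applied to $f(a)=a^n$, one gets $D_q\{a^n\}=(1-q^n)a^{n-1}$. Iterating this $k$ times (by induction on $k$) yields
\[
D_q^k\{a^n\} \;=\; (1-q^n)(1-q^{n-1})\cdots(1-q^{n-k+1})\, a^{n-k}
\;=\; \frac{(q;q)_n}{(q;q)_{n-k}}\, a^{n-k},
\]
for $0\le k\le n$, while $D_q^k\{a^n\}=0$ for $k>n$ (since the factor $(1-q^0)=0$ appears).

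Substituting this back into the series for $T(bD_q)\{a^n\}$ truncates the sum at $k=n$ and gives
\[
T(bD_q)\{a^n\} \;=\; \sum_{k=0}^n \frac{(q;q)_n}{(q;q)_k(q;q)_{n-k}}\, b^k a^{n-k}
\;=\; \sum_{k=0}^n {n\brack k} b^k a^{n-k},
\]
which is the claim. There is no genuine obstacle here; the only point requiring a moment of care is the inductive evaluation of $D_q^k\{a^n\}$ and the observation that $D_q^k\{a^n\}$ vanishes for $k>n$, which automatically converts the infinite sum defining $T(bD_q)$ into the finite sum on the right-hand side of \eqref{tbdq}.
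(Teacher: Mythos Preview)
Your proof is correct and is exactly the routine verification the paper has in mind; the paper itself does not spell out a proof for this lemma, merely stating that it ``is easy to verify.'' Your direct computation of $D_q^k\{a^n\}=\dfrac{(q;q)_n}{(q;q)_{n-k}}a^{n-k}$ followed by substitution into the series for $T(bD_q)$ is the natural argument and needs no modification.
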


From the $q$-Leibniz rule for $D_q$, Zhang and Wang
\cite{zhangwang05}  obtain  the following identity.

\begin{lem}
Let $n$ be a nonnegative integer. Then
\[
D_q^n\left\{\frac{(at;q)_\infty}{(av;q)_\infty}\right\}=v^n
(t/v;q)_n \frac{(atq^n;q)_\infty}{(av;q)_\infty}.
\]
\end{lem}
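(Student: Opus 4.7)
My plan is to prove the lemma by induction on $n$, reducing the general case to the $n=1$ computation followed by a repeated application of $D_q$. The case $n=0$ is trivial. For the base case $n=1$, set $f(a) = (at;q)_\infty/(av;q)_\infty$ and use the factorizations $(at;q)_\infty = (1-at)(atq;q)_\infty$ and $(av;q)_\infty = (1-av)(avq;q)_\infty$ to obtain $f(aq)/f(a) = (1-av)/(1-at)$. Substituting into the definition of $D_q$ gives
\[
D_q\{f(a)\} \;=\; \frac{f(a)-f(aq)}{a} \;=\; f(a)\cdot\frac{(1-at)-(1-av)}{a(1-at)} \;=\; \frac{(v-t)(atq;q)_\infty}{(av;q)_\infty},
\]
which agrees with the right-hand side of the lemma at $n=1$, since $v(t/v;q)_1 = v - t$.

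For the inductive step, assume the formula holds at $n-1$. Applying $D_q$ and noting that the prefactor $v^{n-1}(t/v;q)_{n-1}$ is independent of $a$, the task reduces to evaluating $D_q\{(atq^{n-1};q)_\infty/(av;q)_\infty\}$. This is precisely the $n=1$ computation with $t$ replaced by $tq^{n-1}$, and yields $(v-tq^{n-1})(atq^n;q)_\infty/(av;q)_\infty$. Multiplying out,
\[
v^{n-1}(t/v;q)_{n-1}\cdot(v-tq^{n-1}) \;=\; v^n(t/v;q)_{n-1}\bigl(1 - (t/v)q^{n-1}\bigr) \;=\; v^n(t/v;q)_n,
\]
which closes the induction.

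The main obstacle is really only algebraic bookkeeping: making sure the infinite products telescope correctly, and that the factor $1-(t/v)q^{n-1}$ gets absorbed cleanly into $(t/v;q)_{n-1}$ to produce $(t/v;q)_n$. An alternative route, hinted at by the paper's reference to the $q$-Leibniz rule, is to expand via that rule with $f(a)=(at;q)_\infty$ and $g(a)=1/(av;q)_\infty$, use the base derivatives $D_q^k\{(at;q)_\infty\} = (-t)^k q^{k \choose 2}(atq^k;q)_\infty$ and $D_q^k\{1/(av;q)_\infty\} = v^k/(av;q)_\infty$ (each proven by a short induction on $k$), and then recognize the resulting sum as a terminating ${}_2\phi_1$ summable by the $q$-Chu-Vandermonde formula; this works, but is strictly more labor than the direct induction above.
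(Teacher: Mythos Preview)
Your induction proof is correct. The base case and the inductive step both check out, and the telescoping of the $q$-Pochhammer factors is handled cleanly.

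The paper itself does not supply a proof of this lemma; it simply attributes the identity to Zhang and Wang and indicates that it is obtained from the $q$-Leibniz rule for $D_q$. That is precisely the alternative route you sketch at the end: split the quotient as $(at;q)_\infty \cdot 1/(av;q)_\infty$, apply the Leibniz expansion, insert the elementary derivatives $D_q^k\{(at;q)_\infty\}=(-t)^k q^{\binom{k}{2}}(atq^k;q)_\infty$ and $D_q^{n-k}\{1/(avq^k;q)_\infty\}=(vq^k)^{n-k}/(avq^k;q)_\infty$, and then sum the resulting terminating series by $q$-Chu--Vandermonde. Your direct induction bypasses all of this machinery: it needs no Leibniz rule and no summation formula, only the one-step computation of $D_q$ on a ratio of infinite products. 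The Leibniz approach has the mild advantage of being the same template used elsewhere in the paper (e.g.\ in the proof of Lemma~\ref{lemtbdq2/1}), but for this particular identity your argument is shorter and entirely self-contained.
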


Based on the above relation, we  obtain the following formula.

\begin{lem}\label{lemtbdq2/1} We have
\begin{equation}\label{tbdq2/1}
T(bD_q)
\left\{\frac{(as,at;q)_\infty}{(av;q)_\infty}\right\}=
\frac{(as,at;q)_\infty}{(av;q)_\infty}
\sum_{k=0}^\infty \frac{(-1)^kq^{k\choose
2}(av;q)_k(bs)^k}{(q;q)_k(as,at;q)_k}\, {}_{2}\phi_1\left(\begin{array}{c}
t/v,0\\
atq^k
\end{array};q,bv\right).
\end{equation}
\end{lem}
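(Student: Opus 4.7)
The plan is to apply the operator $T(bD_q)=\sum_{n\ge0}\frac{(bD_q)^n}{(q;q)_n}$ termwise and, for each $n$, to split
\[
\frac{(as,at;q)_\infty}{(av;q)_\infty}=(as;q)_\infty\cdot\frac{(at;q)_\infty}{(av;q)_\infty}
\]
so that the $q$-Leibniz rule for $D_q^n$ can be used. First I would establish the auxiliary identity
\[
D_q^k\{(as;q)_\infty\}=(-s)^kq^{k\choose 2}(asq^k;q)_\infty,
\]
which is immediate by induction on $k$ using $(as;q)_\infty-(asq;q)_\infty=-as(asq;q)_\infty$. For the second factor I would invoke Lemma 2.2 with $a\mapsto a$ and with parameters shifted by $q^k$ (coming from the $g(q^ka)$ slot in the Leibniz rule) to obtain
\[
D_q^{n-k}\!\left\{\frac{(atq^k;q)_\infty}{(avq^k;q)_\infty}\right\}=(vq^k)^{n-k}(t/v;q)_{n-k}\frac{(atq^n;q)_\infty}{(avq^k;q)_\infty}.
\]

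Next I would substitute these two evaluations into the $q$-Leibniz expansion, collect the $q$-powers (noting the factor $q^{k(k-n)}$ from Leibniz combines with $q^{k(n-k)}$ from $(vq^k)^{n-k}$ to cancel all mixed exponents), and plug the result into $T(bD_q)$. Setting $m=n-k$ and swapping the order of summation converts the single index $n$ into independent indices $k$ and $m$, and the binomial coefficient ${k+m\brack k}$ collapses with $(q;q)_{k+m}^{-1}$ to give $\frac{1}{(q;q)_k(q;q)_m}$. After this the $k$-sum carries the factor $(-bs)^kq^{k\choose 2}(asq^k;q)_\infty/(avq^k;q)_\infty$ and the $m$-sum carries $(bv)^m(t/v;q)_m(atq^{k+m};q)_\infty$ divided by $(q;q)_m$.

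To reach the stated right-hand side I would pull out the global factor $\frac{(as,at;q)_\infty}{(av;q)_\infty}$ by using
\[
(asq^k;q)_\infty=\frac{(as;q)_\infty}{(as;q)_k},\qquad\frac{1}{(avq^k;q)_\infty}=\frac{(av;q)_k}{(av;q)_\infty},\qquad (atq^{k+m};q)_\infty=\frac{(at;q)_\infty}{(at;q)_k(atq^k;q)_m}.
\]
After this cleanup the $m$-sum is exactly
\[
\sum_{m=0}^\infty\frac{(t/v;q)_m}{(q;q)_m(atq^k;q)_m}(bv)^m={}_{2}\phi_1\!\left(\begin{array}{c}t/v,0\\atq^k\end{array};q,bv\right),
\]
since $(0;q)_m=1$ and the prefactor $[(-1)^mq^{m\choose 2}]^{1+s-r}$ in (\ref{gHei}) is trivial when $r=2,s=1$.

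The routine parts are the induction giving $D_q^k\{(as;q)_\infty\}$ and repeated use of $(a;q)_{k+m}=(a;q)_k(aq^k;q)_m$. The main obstacle, and the place where care is needed, is the double sum manipulation: correctly tracking the $q^{k(k-n)}$, $q^{k\choose 2}$ and $(vq^k)^{n-k}$ exponents through the interchange of summation so that after setting $m=n-k$ the remaining factors separate cleanly into a $k$-sum and an inner ${}_2\phi_1$, with no residual cross terms. Once the powers of $q$ cancel as described, the identity falls into place.
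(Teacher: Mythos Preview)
Your proposal is correct and follows essentially the same route as the paper: expand $T(bD_q)$ termwise, apply the $q$-Leibniz rule to the factorization $(as;q)_\infty\cdot\frac{(at;q)_\infty}{(av;q)_\infty}$, evaluate the two pieces via induction and Lemma~2.2, then swap summation and identify the inner sum as a ${}_2\phi_1$. The $q$-power bookkeeping you flag (cancellation of $q^{k(k-n)}$ against the $q^{k(n-k)}$ from $(vq^k)^{n-k}$) matches the paper exactly.
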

\begin{proof}
By the definition of $T(bD_q)$ and the $q$-Leibniz rule for $D_q$,
we have \allowdisplaybreaks
\begin{align*}
T(b & D_q) \left\{\frac{(as,at;q)_\infty}{(av;q)_\infty}\right\}=
\sum_{n=0}^\infty\frac{b^n}{(q;q)_n}
D_q^n \left\{\frac{(as,at;q)_\infty}{(av;q)_\infty}\right\}\\[5pt]
&=\sum_{n=0}^\infty\frac{b^n}{(q;q)_n}\sum_{k=0}^n {n\brack
k}q^{k(k-n)}D_q^k\{(as;q)_\infty\}D_q^{n-k}
\left\{\frac{(atq^k;q)_\infty}{(avq^k;q)_\infty}\right\}\\[5pt]
&=\sum_{n=0}^\infty\frac{b^n}{(q;q)_n}\sum_{k=0}^n
{n\brack k}q^{k(k-n)} (-1)^k q^{k\choose 2} s^k (asq^k;q)_\infty
(vq^k)^{n-k} (t/v;q)_{n-k}
\frac{(atq^n;q)_\infty}{(avq^k;q)_\infty}\\[5pt]
&=\frac{(as,at;q)_\infty}{(av;q)_\infty}\sum_{n=0}^\infty\frac{b^n}{(q;q)_n}
\sum_{k=0}^n
{n\brack k} (-1)^k q^{k\choose 2} s^k
\frac{(av;q)_k(t/v;q)_{n-k}v^{n-k}}{(as;q)_k(at;q)_n}\\[5pt]
&=\frac{(as,at;q)_\infty}{(av;q)_\infty} \sum_{k=0}^\infty
\frac{(-1)^kq^{k\choose 2}(av;q)_k(bs)^k}{(q,as,at;q)_k}
\sum_{n=0}^\infty \frac{ (t/v;q)_n(bv)^n}{(q;q)_n(atq^k;q)_n}\\[5pt]
&=\frac{(as,at;q)_\infty}{(av;q)_\infty} \sum_{k=0}^\infty
\frac{(-1)^kq^{k\choose
2}(av;q)_k(bs)^k}{(q;q)_k(as,at;q)_k}{}_{2}\phi_1\left(\begin{array}{c}
t/v,0\\
atq^k
\end{array};q,bv\right).
\end{align*}
This completes the proof.
\end{proof}

Now we are ready to present a Rogers-type formula for the
polynomials $U_n(x,y,a;q)$.

\begin{thm}\label{rog1} We have
\begin{align}\label{rogers1}
\sum_{n=0}^\infty &\sum_{m=0}^\infty U_{n+m}(x,y,a;q)
\frac{t^n}{(q;q)_n}\frac{s^m}{(q;q)_m}\nonumber\\
&=\frac{(as,ys;q)_\infty}{(xs;q)_\infty}\sum_{k=0}^\infty
\frac{(-1)^kq^{k\choose
2}(xs;q)_k(at)^k}{(q;q)_k(as,ys;q)_k}{}_{2}\phi_1\left(\begin{array}{c}
y/x, 0\\
ysq^k
\end{array};q,xt\right),
\end{align}
provided that $|xs|<1$.
\end{thm}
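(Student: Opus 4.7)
The plan is to derive the Rogers-type formula by applying the $q$-exponential operator $T(tD_q)$, with $D_q$ acting on the variable $s$, to the generating function \eqref{gf-mbjp}. Rewriting \eqref{gf-mbjp} with $s$ as the free parameter gives
\[
\frac{(as,ys;q)_\infty}{(xs;q)_\infty}=\sum_{m=0}^\infty U_m(x,y,a;q)\frac{s^m}{(q;q)_m},
\]
and both sides of \eqref{rogers1} will emerge by operating on this identity with $T(tD_q)$.

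First I would handle the series side. Using \eqref{tbdq} term by term,
\[
T(tD_q)\Big\{\frac{s^m}{(q;q)_m}\Big\}=\sum_{n=0}^{m}\frac{t^n\,s^{m-n}}{(q;q)_n(q;q)_{m-n}}.
\]
Substituting this into the series for the generating function, setting $j=m-n$, and interchanging the order of summation produces $\sum_{n,j\ge 0} U_{n+j}(x,y,a;q)\, t^n s^j/((q;q)_n(q;q)_j)$, which is precisely the left-hand side of \eqref{rogers1}.

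For the closed-form side I would apply Lemma~\ref{lemtbdq2/1} to $(as,ys;q)_\infty/(xs;q)_\infty$, viewed as a function of $s$. Lemma~\ref{lemtbdq2/1} is stated with $D_q$ acting on $a$ and dummy parameters $s,t,v,b$; to use it here I interchange the roles of the acting variable and the scalar $a$, and identify the remaining dummy parameters via $(t_{\mathrm{lem}},v_{\mathrm{lem}},b_{\mathrm{lem}})\leftrightarrow(y,x,t)$. Under this identification $t_{\mathrm{lem}}/v_{\mathrm{lem}}$ becomes $y/x$, $b_{\mathrm{lem}}v_{\mathrm{lem}}$ becomes $xt$, and $a_{\mathrm{lem}}t_{\mathrm{lem}}q^k$ becomes $ysq^k$, so the output of the lemma reproduces the right-hand side of \eqref{rogers1} term for term. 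Equating the two computations then gives the theorem, with the convergence condition $|xs|<1$ inherited from \eqref{gf-mbjp}.

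The only real care needed is the parameter bookkeeping between the lemma and the theorem, together with justifying that $T(tD_q)$ may be exchanged with the infinite sum in $m$ — this follows from the analyticity of $(as,ys;q)_\infty/(xs;q)_\infty$ in $s$ on the disk $|xs|<1$. There is no deeper obstacle: once the correct operator identification is made, the result is essentially a relabeling of Lemma~\ref{lemtbdq2/1}.
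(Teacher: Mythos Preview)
Your proposal is correct and follows essentially the same route as the paper's own proof: both arguments identify the double sum as $T(tD_q)$ (acting on $s$) applied to the generating function \eqref{gf-mbjp}, and then invoke Lemma~\ref{lemtbdq2/1} with the substitution $(a,s,t,v,b)\mapsto(s,a,y,x,t)$ to obtain the right-hand side. The only difference is expository order---the paper starts from the double sum and works toward the operator expression, whereas you start from the generating function and apply the operator to both sides---but the content is identical.
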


\begin{proof}
Setting $m\rightarrow m-n$, exchanging the order of the sum on the
left hand side of \eqref{rogers1}, and applying the operator
identity (\ref{tbdq}), we obtain \allowdisplaybreaks
\begin{align*}
\sum_{n=0}^\infty & \sum_{m=0}^\infty U_{n+m}(x,y,a;q)
\frac{t^n}{(q;q)_n}\frac{s^m}{(q;q)_m}\\
&=\sum_{n=0}^\infty \sum_{m=n}^\infty U_m(x,y,a;q)
\frac{t^n}{(q;q)_n}\frac{s^{m-n}}{(q;q)_{m-n}}=\sum_{m=0}^\infty
\frac{U_m(x,y,a;q)}{(q;q)_m} \sum_{n=0}^m {m\brack
n}t^ns^{m-n}\\
&=\sum_{m=0}^\infty \frac{U_m(x,y,a;q)}{(q;q)_m} T(tD_q)
 \{s^m\}=T(tD_q) \left\{\sum_{m=0}^\infty U_m(x,y,a;q)\frac{s^m}{(q;q)_m}\right\}\quad (|xs|<1) \\
&=T(tD_q) \left\{\frac{(as,ys;q)_\infty}{(xs;q)_\infty} \right\},
\end{align*}
where  $D_q$ acts on the parameter $s$.
Applying  Lemma \ref{lemtbdq2/1}, we complete the proof.
\end{proof}

From the above Rogers-type formula, we obtain the
following linearization formula for $U_n(x,y,a;q)$.

\begin{thm} For $n, m\geq 0$, we have
\[
U_{n+m}(x,y,a;q)=\sum_{k=0}^n {n\brack k} (-1)^kq^{k\choose
2}(aq^m)^k P_{n-k}(x,y) U_m(x,yq^{n-k},a;q).
\]
\end{thm}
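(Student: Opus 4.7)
The plan is to derive the linearization formula by extracting the coefficient of $t^n/(q;q)_n \cdot s^m/(q;q)_m$ on both sides of the Rogers-type formula \eqref{rogers1} from Theorem~\ref{rog1}. On the left-hand side, this coefficient is precisely $U_{n+m}(x,y,a;q)$, so the entire work lies in massaging the right-hand side into the desired closed form.

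First I would expand the ${}_2\phi_1$ on the right-hand side of \eqref{rogers1} as a series in $xt$ with summation index $j$, and then collect the $t$-powers by substituting $n=k+j$. Using $(ysq^k;q)_{n-k}=(ys;q)_n/(ys;q)_k$, the factor $(as,ys;q)_k(ysq^k;q)_{n-k}$ collapses to $(as;q)_k(ys;q)_n$. Moreover $x^{n-k}(y/x;q)_{n-k}=P_{n-k}(x,y)$ by the definition of the Cauchy polynomial. After pulling out the infinite product $(as,ys;q)_\infty/(xs;q)_\infty$ and rewriting $1/((q;q)_k(q;q)_{n-k})={n\brack k}/(q;q)_n$, the right-hand side becomes
\begin{equation*}
\sum_{n=0}^\infty \frac{t^n}{(q;q)_n}\sum_{k=0}^n{n\brack k}(-1)^k q^{k\choose 2}a^k P_{n-k}(x,y)\,\frac{(asq^k,ysq^n;q)_\infty}{(xsq^k;q)_\infty}.
\end{equation*}

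The next step is to recognize the trailing infinite product as a generating function of Al-Salam-Carlitz polynomials in which the parameters have been shifted. Indeed, applying \eqref{gf-mbjp} with $s$ replaced by $sq^k$ and $y$ replaced by $yq^{n-k}$ gives
\begin{equation*}
\frac{(asq^k,ysq^n;q)_\infty}{(xsq^k;q)_\infty}=\sum_{m=0}^\infty U_m(x,yq^{n-k},a;q)\,\frac{(sq^k)^m}{(q;q)_m},
\end{equation*}
since $yq^{n-k}\cdot sq^k=ysq^n$. Substituting this back and interchanging the order of the sums on $m$ and $k$ pulls a factor $q^{km}$ into the coefficients, so that the right-hand side of \eqref{rogers1} takes the form
\begin{equation*}
\sum_{n=0}^\infty\sum_{m=0}^\infty\frac{t^n}{(q;q)_n}\frac{s^m}{(q;q)_m}\sum_{k=0}^n{n\brack k}(-1)^k q^{k\choose 2}(aq^m)^k P_{n-k}(x,y)\,U_m(x,yq^{n-k},a;q).
\end{equation*}

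Comparing the coefficient of $t^n s^m/((q;q)_n(q;q)_m)$ with the left-hand side of \eqref{rogers1} yields the claimed identity. The main obstacle is the bookkeeping with the $q$-Pochhammer symbols in the telescoping steps $(as,ys;q)_k(ysq^k;q)_{n-k}=(as;q)_k(ys;q)_n$ and $(xs;q)_k(as,ys;q)_\infty/((as;q)_k(ys;q)_n(xs;q)_\infty)=(asq^k,ysq^n;q)_\infty/(xsq^k;q)_\infty$; once these simplifications are in place, the recognition of the shifted generating function \eqref{gf-mbjp} is automatic and the coefficient extraction is a formal matching.
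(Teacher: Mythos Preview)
Your proof is correct and follows essentially the same route as the paper: expand the ${}_2\phi_1$ on the right-hand side of the Rogers-type formula, simplify the $q$-Pochhammer symbols so that the remaining infinite product becomes $(asq^k,ysq^n;q)_\infty/(xsq^k;q)_\infty$, recognize this as the generating function \eqref{gf-mbjp} for $U_m(x,yq^{n-k},a;q)$ in the variable $sq^k$, and then compare coefficients of $t^n s^m$. The only cosmetic difference is that you perform the substitution $n=k+j$ to collect $t$-powers before applying the generating function, whereas the paper leaves the double sum over $k$ and $j$ intact and does the coefficient extraction in one step at the end.
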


\begin{proof}
Rewrite the Rogers-type formula \eqref{rogers1} as follows
\begin{align*}
\sum_{n=0}^\infty &\sum_{m=0}^\infty U_{n+m}(x,y,a;q)
\frac{t^n}{(q;q)_n}\frac{s^m}{(q;q)_m}\nonumber\\
&=\sum_{k=0}^\infty \frac{(-1)^kq^{k\choose
2}(at)^k}{(q;q)_k}\sum_{n=0}^\infty \frac{(y/x;q)_n}{(q;q)_n} (xt)^n
\frac{(asq^k,ysq^{n+k};q)_\infty}{(xsq^k;q)_\infty}\\
&=\sum_{k=0}^\infty \frac{(-1)^kq^{k\choose
2}(at)^k}{(q;q)_k}\sum_{n=0}^\infty \frac{(y/x;q)_n}{(q;q)_n} (xt)^n
\sum_{l=0}^\infty U_l(x,yq^n,a;q) \frac{(sq^k)^l}{(q;q)_l}.
\end{align*}
Equating the coefficients of $t^{n}s^{m}$ in the above equation, the
desired identity follows.
\end{proof}

\section{Mehler's Formula}

In this section, we aim to introduce the homogeneous $q$-shift
operator which can be used to give a simple derivation of  Mehler's
formula for $U_n(x,y,a;q)$ due to Al-Salam and Carlitz.

Recall that the homogeneous $q$-difference operator $D_{xy}$ introduced by Chen, Fu and
Zhang \cite{chenfuzhang} is
given  by
\begin{equation}\label{opexy}
D_{xy} \{f(x,y)\}=\frac{f(x,q^{-1}y)-f(qx,y)}{x-q^{-1}y}.
\end{equation}
Based on this operator $D_{xy}$, we
construct the following homogeneous $q$-shift operator
\begin{equation}\label{operator}
\mathbb{F}(aD_{xy})=\sum_{n=0}^\infty \frac{(-1)^nq^{n\choose
2}(aD_{xy})^n}{(q;q)_n}.
\end{equation}

The $q$-difference operator $D_{xy}$ has the following basic
properties.

\begin{prop}\label{propdxy}
We have
\begin{align}
  D_{xy}^{k}\{P_n(x,y)\} &= \frac{(q;q)_{n}}{(q;q)_{n-k}}P_{n-k}(x,y),\label{dxypn} \\
  D_{xy}^{k} \left\{\frac{(yt;q)_\infty}{(xt;q)_\infty}\right\} &=
  t^{k} \frac{(yt;q)_\infty}{(xt;q)_\infty}.\label{dxyytxt}
\end{align}
\end{prop}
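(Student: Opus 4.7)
The plan is to treat the two identities separately and, in each case, reduce to the base case $k=1$ by an obvious induction; $k=0$ is trivial since $D_{xy}^{0}$ is the identity. The heart of the matter is therefore the one-step computation of $D_{xy}$ on each object, after which iterating $k$ times gives the stated formula.

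For identity \eqref{dxypn}, I would first show that $D_{xy}\{P_{n}(x,y)\} = (1-q^{n})\,P_{n-1}(x,y)$. The key observation is that both $P_{n}(x,q^{-1}y)$ and $P_{n}(qx,y)$ factor cleanly in terms of $P_{n-1}(x,y)$:
\begin{align*}
P_{n}(x,q^{-1}y) &= (x-q^{-1}y)(x-y)(x-qy)\cdots(x-q^{n-2}y) = (x-q^{-1}y)\,P_{n-1}(x,y),\\
P_{n}(qx,y) &= (qx-y)(qx-qy)\cdots(qx-q^{n-1}y) = q^{n}(x-q^{-1}y)\,P_{n-1}(x,y).
\end{align*}
Substituting these into the definition \eqref{opexy}, the factor $x-q^{-1}y$ cancels and one is left with $(1-q^{n})P_{n-1}(x,y)$. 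Iterating $k$ times produces the falling product $(1-q^{n})(1-q^{n-1})\cdots(1-q^{n-k+1}) = (q;q)_{n}/(q;q)_{n-k}$, giving \eqref{dxypn}.

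For identity \eqref{dxyytxt}, I would similarly aim to prove the single-step formula $D_{xy}\bigl\{(yt;q)_{\infty}/(xt;q)_{\infty}\bigr\} = t\cdot (yt;q)_{\infty}/(xt;q)_{\infty}$, from which the general $k$ follows by induction. The two needed shifts of the infinite $q$-products are
\begin{equation*}
(q^{-1}yt;q)_{\infty} = (1-q^{-1}yt)(yt;q)_{\infty}, \qquad (xt;q)_{\infty} = (1-xt)(qxt;q)_{\infty},
\end{equation*}
which immediately yield $F(x,q^{-1}y) = (1-q^{-1}yt)F(x,y)$ and $F(qx,y) = (1-xt)F(x,y)$, where $F(x,y) = (yt;q)_{\infty}/(xt;q)_{\infty}$. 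Subtracting gives a numerator of $(xt - q^{-1}yt)F(x,y) = t(x-q^{-1}y)F(x,y)$, and dividing by $x-q^{-1}y$ delivers exactly $tF(x,y)$.

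Neither part presents a genuine obstacle; the only place one must be careful is in correctly handling the Pochhammer shifts $(q^{-1}yt;q)_{\infty}$ and $(xt;q)_{\infty}$ so that the factor $x-q^{-1}y$ in the denominator of $D_{xy}$ visibly cancels in both computations. Once the $k=1$ formulas are established, the induction on $k$ is immediate because the right-hand sides of \eqref{dxypn} and \eqref{dxyytxt} retain the same functional form (up to scalar factors) as the left-hand sides.
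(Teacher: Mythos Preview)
Your argument is correct. Both single-step computations are accurate: for $P_n$ you correctly factor $P_n(x,q^{-1}y)=(x-q^{-1}y)P_{n-1}(x,y)$ and $P_n(qx,y)=q^{n}(x-q^{-1}y)P_{n-1}(x,y)$, and for the infinite product the shifts $(q^{-1}yt;q)_\infty=(1-q^{-1}yt)(yt;q)_\infty$ and $(qxt;q)_\infty=(xt;q)_\infty/(1-xt)$ are exactly what is needed. The induction on $k$ is then immediate in both cases.

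As for comparison: the paper does not supply a proof of this proposition at all. It is recorded as a basic property of the operator $D_{xy}$ introduced by Chen, Fu and Zhang \cite{chenfuzhang}, and the reader is implicitly referred there. Your direct verification is the standard one; an essentially equivalent route for \eqref{dxyytxt} is to apply \eqref{dxypn} termwise to the generating function \eqref{gf-pn}, which after reindexing reproduces $t^k$ times the same series. Either way there is nothing subtle here, and your write-up would serve perfectly well as the omitted proof.
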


Invoking (\ref{dxypn}), the Al-Salam-Carlitz  polynomials
$U_n(x,y,a;q)$ can be expressed in terms of the homogeneous
$q$-shift operator $\mathbb{F}(aD_{xy})$.

\begin{thm} We have
\begin{equation}\label{fbdxypn}
  U_n(x,y,a;q) =  \mathbb{F}(aD_{xy}) \left\{P_n(x,y)\right\}.
\end{equation}
\end{thm}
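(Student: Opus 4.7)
The plan is to expand $\mathbb{F}(aD_{xy})$ term by term according to its definition in \eqref{operator}, apply it to $P_n(x,y)$, use the action formula \eqref{dxypn} from Proposition~\ref{propdxy}, and recognize the resulting finite sum as the explicit expansion of $U_n(x,y,a;q)$ given in \eqref{mbqjp}.

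More precisely, I would first write
\[
\mathbb{F}(aD_{xy})\{P_n(x,y)\}
=\sum_{k=0}^\infty \frac{(-1)^k q^{\binom{k}{2}} a^k}{(q;q)_k}\, D_{xy}^k\{P_n(x,y)\}.
\]
Then I would substitute \eqref{dxypn}, which gives $D_{xy}^k\{P_n(x,y)\}=\frac{(q;q)_n}{(q;q)_{n-k}}P_{n-k}(x,y)$ for $k\le n$ and vanishes for $k>n$ (since $P_0(x,y)=1$ and $D_{xy}\{1\}=0$). This truncates the series at $k=n$ and produces
\[
\mathbb{F}(aD_{xy})\{P_n(x,y)\}
=\sum_{k=0}^n (-1)^k q^{\binom{k}{2}} a^k {n\brack k} P_{n-k}(x,y),
\]
after rewriting $\tfrac{(q;q)_n}{(q;q)_k(q;q)_{n-k}}=\genfrac{[}{]}{0pt}{}{n}{k}$. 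The right-hand side is precisely the expression \eqref{mbqjp} for $U_n(x,y,a;q)$, finishing the proof.

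There is essentially no obstacle here: the only subtlety is making sure the infinite series defining $\mathbb{F}(aD_{xy})$ reduces to a finite sum when acting on the polynomial $P_n(x,y)$, which is immediate from \eqref{dxypn}. The identity is really just a reformulation of the known expansion \eqref{mbqjp} in operator language, and its value lies in setting up the operator machinery used in the subsequent derivation of Mehler's formula.
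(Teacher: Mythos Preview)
Your proof is correct and follows exactly the route indicated by the paper, which simply says ``Invoking (\ref{dxypn})'' before stating the theorem without further detail. You have merely made explicit the straightforward computation the authors left to the reader: expand $\mathbb{F}(aD_{xy})$, apply \eqref{dxypn} to truncate the series, and identify the result with \eqref{mbqjp}.
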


By using (\ref{dxyytxt}), it is easy to derive the following
relation.

\begin{prop}\label{fbdxy} We have
\begin{equation}\label{fdxy}
\mathbb{F}(aD_{xy})\left\{\frac{(yt;q)_\infty}{(xt;q)_\infty}\right\}
=\frac{(at,yt;q)_\infty}{(xt;q)_\infty}.
\end{equation}
\end{prop}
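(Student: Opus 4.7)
The plan is to unpack the definition of $\mathbb{F}(aD_{xy})$ as a power series in $D_{xy}$, move it inside term by term, and then use the eigenfunction-type identity (\ref{dxyytxt}) to reduce the whole calculation to a single $q$-exponential sum.

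Concretely, I would start from
\begin{equation*}
\mathbb{F}(aD_{xy})\left\{\frac{(yt;q)_\infty}{(xt;q)_\infty}\right\}
=\sum_{n=0}^\infty \frac{(-1)^n q^{\binom{n}{2}} a^n}{(q;q)_n}
 \, D_{xy}^n\!\left\{\frac{(yt;q)_\infty}{(xt;q)_\infty}\right\},
\end{equation*}
where the interchange of $\mathbb{F}(aD_{xy})$ with the (absolutely convergent) series is justified in the usual formal/analytic sense for $|xt|<1$. Next, apply Proposition \ref{propdxy}, specifically (\ref{dxyytxt}), which tells us that each $D_{xy}^n$ acts as multiplication by $t^n$ on the quotient $(yt;q)_\infty/(xt;q)_\infty$. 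This lets us pull the quotient out of the sum as a common factor.

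What remains is
\begin{equation*}
\frac{(yt;q)_\infty}{(xt;q)_\infty}\sum_{n=0}^\infty
\frac{(-1)^n q^{\binom{n}{2}}(at)^n}{(q;q)_n},
\end{equation*}
and the inner sum is exactly Euler's $q$-exponential identity, summing to $(at;q)_\infty$. Multiplying gives the right-hand side $(at,yt;q)_\infty/(xt;q)_\infty$, which is what we want.

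There is no real obstacle here; the only point that requires a moment of care is the termwise application of $\mathbb{F}(aD_{xy})$, but since (\ref{dxyytxt}) shows that each $D_{xy}^n$ merely multiplies by $t^n$, convergence of the resulting series is controlled by $|at|<1$ (or, more generally, is valid as a formal identity in $a$), and no further justification beyond invoking the Euler identity is needed.
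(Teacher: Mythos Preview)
Your proof is correct and matches the paper's intended argument: the paper simply states that the identity follows directly from (\ref{dxyytxt}), and your expansion of $\mathbb{F}(aD_{xy})$ followed by Euler's $q$-exponential summation is exactly the computation this remark points to.
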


Combining \eqref{fbdxypn} and
 \eqref{fdxy},  the
  generating function  of
$U_n(x,y,a;q)$ can be derived as follows,
\begin{align*}
\sum_{n=0}^\infty  U_n(x,y,a;q)
\frac{t^n}{(q;q)_n}&=\mathbb{F}(aD_{xy}) \left\{\sum_{n=0}^\infty
P_n(x,y)\frac{t^n}{(q;q)_n}\right\} \qquad (|xt|<1)\\
& =\mathbb{F}(aD_{xy})
\left\{\frac{(yt;q)_\infty}{(xt;q)_\infty}\right\}
=\frac{(at,yt;q)_\infty}{(xt;q)_\infty}.
\end{align*}

The following identity will be used to derive Mehler's formula.

\begin{thm}\label{cordxy} Assume $|xs|<1$. We have
\begin{align}\label{opefbd}
\mathbb{F}(aD_{xy})& \left\{ \frac{P_n(x,y)(ys;q)_\infty
}{(ys;q)_n(xs;q)_\infty
}\right\}=\frac{(ysq^n,as;q)_\infty}{(xs;q)_\infty}\sum_{k=0}^n
{n\brack k} (-1)^{k}q^{k\choose 2}
\frac{(xs;q)_k(y/x;q)_{n-k}}{(as;q)_k} x^{n-k}a^{k}.
\end{align}
\end{thm}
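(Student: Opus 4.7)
The plan is to follow the strategy of Lemma~\ref{lemtbdq2/1}, adapted to the operator $\mathbb{F}(aD_{xy})$. I first rewrite the input of $\mathbb{F}(aD_{xy})$ as the product
\[
\frac{P_n(x,y)(ys;q)_\infty}{(ys;q)_n(xs;q)_\infty} = P_n(x,y) \cdot \frac{(ysq^n;q)_\infty}{(xs;q)_\infty},
\]
using $(ys;q)_\infty/(ys;q)_n=(ysq^n;q)_\infty$. Then I expand
\[
\mathbb{F}(aD_{xy})=\sum_{m\ge 0}\frac{(-1)^m q^{\binom{m}{2}} a^m D_{xy}^m}{(q;q)_m}
\]
and apply a $q$-Leibniz rule for $D_{xy}$ to compute $D_{xy}^m$ on the product, so that the sum over $m$ breaks into a double sum in which one factor becomes $D_{xy}^k\{P_n(x,y)\}$ and the other becomes $D_{xy}^{m-k}$ acting on $(ysq^n;q)_\infty/(xs;q)_\infty$, with appropriate $q$-shifts in $x$ and $y$.

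By Proposition~\ref{propdxy}, relation \eqref{dxypn}, the first factor evaluates to $\frac{(q;q)_n}{(q;q)_{n-k}}P_{n-k}(x,y)$. For the second factor, which is not itself an eigenfunction of $D_{xy}$, I would use the decomposition
\[
\frac{(ysq^n;q)_\infty}{(xs;q)_\infty}=\frac{1}{(xs;q)_n}\cdot\frac{(ysq^n;q)_\infty}{(xsq^n;q)_\infty},
\]
where the rightmost factor is an eigenfunction of $D_{xy}$ with eigenvalue $sq^n$ by relation \eqref{dxyytxt}, and then peel off the auxiliary factor $1/(xs;q)_n$ via a further Leibniz expansion. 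This auxiliary factor is ultimately what produces the ratio $(xs;q)_k/(as;q)_k$ on the right-hand side.

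After substituting these evaluations back, I interchange the order of the resulting multiple sums so that the outer summation becomes an inner $q$-exponential series $\sum_{j\ge 0}(-1)^j q^{\binom{j}{2}}z^j/(q;q)_j$, which telescopes to $(z;q)_\infty$. This produces the factor $(as;q)_\infty$, which combines with the directly inherited $(ysq^n;q)_\infty/(xs;q)_\infty$ to yield the prefactor $(ysq^n,as;q)_\infty/(xs;q)_\infty$. The remaining finite sum over $k$ then matches the claimed expression after rewriting $P_{n-k}(x,y)=x^{n-k}(y/x;q)_{n-k}$.

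The main obstacle is that $(ysq^n;q)_\infty/(xs;q)_\infty$ is not an eigenfunction of $D_{xy}$, so \eqref{dxyytxt} does not apply to it directly. Splitting off the $x$-only factor $(xs;q)_n^{-1}$ converts it into an eigenfunction times a tractable residual, but the Leibniz rule must then be applied twice (first to the outer $P_n$-product, then to the residual), requiring careful tracking of the $q^k$-shifts in $x$ and $y$ throughout. Once these shifts are reconciled, the multiple summation collapses to the desired closed form via the $q$-exponential identity.
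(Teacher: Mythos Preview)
Your approach is genuinely different from the paper's, and it has a real gap.

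The paper does \emph{not} compute $\mathbb{F}(aD_{xy})$ on this expression directly. Instead, it multiplies the left-hand side of \eqref{opefbd} by $t^n/(q;q)_n$ and sums over $n$. Using \eqref{fbdxypn} and the factorization $P_{n+m}(x,y)=P_n(x,y)P_m(x,q^ny)$, this sum is recognized as the left-hand side of the Rogers-type formula \eqref{rogers1}, which has already been evaluated via the one-variable operator $T(tD_q)$. Expanding the right-hand side of \eqref{rogers1} and equating coefficients of $t^n$ then yields \eqref{opefbd} with no further computation. So the paper never needs a Leibniz rule for $D_{xy}$ at all; the work was done earlier with $D_q$.

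Your route, by contrast, hinges on an iterated $q$-Leibniz rule for the bivariate operator $D_{xy}$, applied twice. No such rule is stated in the paper, and the single-step product rule
\[
D_{xy}\{fg\}=g(x,q^{-1}y)\,D_{xy}\{f\}+f(qx,y)\,D_{xy}\{g\}
\]
already shows that iteration introduces simultaneous, asymmetric shifts in $x$ and $y$ that do not reduce to the clean $q$-binomial form you implicitly assume. Your second factor $(ysq^n;q)_\infty/(xs;q)_\infty$ fails to be an eigenfunction of $D_{xy}$ precisely because the $q$-powers on $y$ and $x$ are misaligned, and your proposed fix---peeling off the $x$-only factor $1/(xs;q)_n$ and applying Leibniz again---compounds the shift bookkeeping rather than resolving it. The phrase ``once these shifts are reconciled'' conceals the entire difficulty: you have not shown that the resulting triple sum actually telescopes to a single $q$-exponential and a finite sum in $k$. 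This is the missing idea. If you want a direct argument, you would first need to establish and state a usable iterated Leibniz formula for $D_{xy}$ and then carry out the reconciliation explicitly; otherwise, the paper's coefficient-extraction argument from \eqref{rogers1} is both shorter and complete.
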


\begin{proof}
Applying (\ref{fbdxypn}), the left hand side of the Rogers-type
formula \eqref{rog1} equals
 \allowdisplaybreaks
\begin{align}
\mathbb{F}(a& D_{xy}) \left\{\sum_{n=0}^\infty \sum_{m=0}^\infty
P_{n+m}(x,y) \frac{t^n}{(q;q)_n}\frac{s^m}{(q;q)_m}\right\}\nonumber\\
&=\mathbb{F}(aD_{xy})\left\{\sum_{n=0}^\infty P_{n}(x,y)
\frac{t^n}{(q;q)_n}\sum_{m=0}^\infty
P_m(x,q^ny)\frac{s^m}{(q;q)_m}\right\}\nonumber \quad (|xs|<1)\\
&=\mathbb{F}(aD_{xy})\left\{\sum_{n=0}^\infty P_{n}(x,y)
\frac{t^n}{(q;q)_n}\frac{(yq^ns;q)_\infty}{(xs;q)_\infty}\right\}\nonumber\\
&=\sum_{n=0}^\infty \mathbb{F}(aD_{xy})\left\{\frac{P_{n}(x,y)
(ys;q)_\infty}{(ys;q)_n(xs;q)_\infty}\right\}
\frac{t^n}{(q;q)_n}.\label{fdxy1}
\end{align}
On the other hand, the right hand side of
\eqref{rog1} can be restated as
\begin{align}
&\frac{(as,ys;q)_\infty}{(xs;q)_\infty}\sum_{k=0}^\infty
\frac{(-1)^kq^{k\choose
2}(xs;q)_k(at)^k}{(q;q)_k(as,ys;q)_k}\sum_{l=0}^\infty
\frac{(y/x;q)_l}{(q,ysq^k;q)_l}(xt)^l. \label{fdxy2}
\end{align}
Equating the coefficients of $t^n$ in \eqref{fdxy1} and
\eqref{fdxy2}, we complete the proof.
\end{proof}

Applying the above operator identity, we  obtain Mehler's formula
involving a terminating ${}_3\phi_2$ series.

\begin{thm} We have
\begin{align}
\sum_{n=0}^\infty & (-1)^nq^{-{n\choose 2}}
U_{n}(x,y,a;q)U_{n}(u,v,b;q)
\frac{t^n}{(q;q)_n}\nonumber\\
&=\frac{(abt,ybt,avt;q)_\infty}{(xbt,aut;q)_\infty} {}_{3}\phi_2
\left(\begin{array}{c}
y/x,v/u, q/abt\\
q/xbt, q/aut
\end{array};q,q\right),\label{mehler1}
\end{align}
where $y/x=q^{-r}$ or $v/u=q^{-r}$ for a nonnegative
integer $r$, and
$\max\{|xbtq^{-r}|,|autq^{-r}|\}<1$.

\end{thm}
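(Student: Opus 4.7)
My plan is to pull the homogeneous $q$-shift operator $\mathbb{F}(aD_{xy})$ off one of the two bilinear factors and then apply Theorem \ref{cordxy} term by term. By the symmetry $U_n(x,y,a;q)U_n(u,v,b;q)=U_n(u,v,b;q)U_n(x,y,a;q)$, and because the right hand side of \eqref{mehler1} is itself invariant under the swap $(x,y,a)\leftrightarrow(u,v,b)$, I may assume without loss of generality that $v/u=q^{-r}$. Using \eqref{fbdxypn} and the linearity of $\mathbb{F}(aD_{xy})$, the left hand side of \eqref{mehler1} equals $\mathbb{F}(aD_{xy})\{I\}$, where
\[
I=\sum_{n=0}^\infty(-1)^nq^{-\binom{n}{2}}P_n(x,y)\,U_n(u,v,b;q)\,\frac{t^n}{(q;q)_n}.
\]

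The first step is to put $I$ into a form suitable for Theorem \ref{cordxy}. Substituting the ${}_2\phi_1$-representation \eqref{mbigqjac} of $U_n(u,v,b;q)$ cancels the prefactor $(-1)^nq^{-\binom{n}{2}}$ against the $(-b)^nq^{\binom{n}{2}}$ inside. Interchanging the order of summation is legal because $v/u=q^{-r}$ makes the inner series terminate at $r$, and the remaining $n$-series is summed by the generating function \eqref{gf-pn} of the Cauchy polynomials. The result is
\[
I=\sum_{j=0}^{r}\frac{(v/u;q)_j(-qut)^j q^{-\binom{j+1}{2}}}{(q;q)_j}\cdot\frac{P_j(x,y)(ybt;q)_\infty}{(xbtq^{-j};q)_\infty}.
\]
The key observation is that, with $s_j:=btq^{-j}$,
\[
\frac{(ybt;q)_\infty}{(xbtq^{-j};q)_\infty}=\frac{(ys_j;q)_\infty}{(ys_j;q)_j(xs_j;q)_\infty},
\]
so each summand is exactly the kernel that Theorem \ref{cordxy} evaluates. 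Applying Theorem \ref{cordxy} with $n=j$ and $s=s_j$ to each term expresses $\mathbb{F}(aD_{xy})\{I\}$ as a double sum indexed by $j$ and by an inner index $l$.

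The remaining step is to collapse this double sum into the single ${}_3\phi_2$ on the right of \eqref{mehler1}. Using the elementary identity $(cq^{-j};q)_j=(-c)^jq^{-\binom{j+1}{2}}(q/c;q)_j$ with $c=abt$ and with $c=xbt$, one rewrites
\[
\frac{(abtq^{-j};q)_\infty}{(xbtq^{-j};q)_\infty}=\frac{(a/x)^j(q/abt;q)_j(abt;q)_\infty}{(q/xbt;q)_j(xbt;q)_\infty},
\]
which pulls the desired prefactor $(abt,ybt;q)_\infty/(xbt;q)_\infty$ out of the double sum. After reversing the summation index $l\mapsto j-l$ in the inner sum supplied by Theorem \ref{cordxy}, the factors of $x$, the signs $(-1)^l$ and the ratios $(q/xbt;q)_j$ telescope, and the $l$-sum collapses to a terminating ${}_3\phi_2\bigl(y/x,q/abt,q^{-j};q/xbt,0;q,q\bigr)$. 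Swapping the order of the remaining double sum and summing the inner $j$-series by the $q$-binomial theorem \eqref{gf-pn} produces the missing factor $(avt;q)_\infty/(aut;q)_\infty$, and what survives is exactly the ${}_3\phi_2\bigl(y/x,v/u,q/abt;q/xbt,q/aut;q,q\bigr)$ of the statement.

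The main obstacle is bookkeeping rather than anything conceptual: the negative powers $q^{-j}$ introduced by $s_j=btq^{-j}$ interact non-trivially with the identity $(cq^{-j};q)_j=(-c)^jq^{-\binom{j+1}{2}}(q/c;q)_j$, and the signs and $q^{\binom{\cdot}{2}}$-factors have to be tracked with care so that the intended ${}_3\phi_2$ appears in the correct normalisation rather than as some equivalent but disguised hypergeometric expression. The convergence hypothesis $\max\{|xbtq^{-r}|,|autq^{-r}|\}<1$ is precisely what is needed to justify the last application of the $q$-binomial theorem.
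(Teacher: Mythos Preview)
Your proposal is correct and follows essentially the same route as the paper: pull out $\mathbb{F}(aD_{xy})$ via \eqref{fbdxypn}, rewrite the remaining sum so that each term has the shape $P_j(x,y)(ys_j;q)_\infty/\bigl((ys_j;q)_j(xs_j;q)_\infty\bigr)$ with $s_j=btq^{-j}$, apply Theorem~\ref{cordxy} term by term, and then simplify the resulting double sum using $(cq^{-j};q)_j=(-c)^j q^{-\binom{j+1}{2}}(q/c;q)_j$ together with the $q$-binomial theorem to extract the prefactor $(abt,ybt,avt;q)_\infty/(xbt,aut;q)_\infty$ and the ${}_3\phi_2$. The only cosmetic difference is the order of operations in the last step: the paper reindexes the double sum by $(n,k)\mapsto(n-k,k)$ and then sums the resulting inner Cauchy series, whereas you first reverse $l\mapsto j-l$ to recognise a terminating ${}_3\phi_2(y/x,q/abt,q^{-j};q/xbt,0;q,q)$, swap the order of summation, and sum the $j$-series; both routes produce the same identity with the same use of the terminating hypothesis.
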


\begin{proof} Using (\ref{fbdxypn}), we find
\allowdisplaybreaks
\begin{align}
\sum_{n=0}^\infty &(-1)^n q^{-{n\choose 2}}
U_{n}(x,y,a;q)U_{n}(u,v,b;q)
\frac{t^n}{(q;q)_n}\nonumber\\
&=\mathbb{F}(aD_{xy}) \left\{\sum_{n=0}^\infty (-1)^n q^{-{n\choose
2}} P_{n}(x,y)U_{n}(u,v,b;q) \frac{t^n}{(q;q)_n}\right\}\nonumber\\
&=\mathbb{F}(aD_{xy}) \left\{\sum_{n=0}^\infty (-1)^n q^{-{n\choose
2}} P_{n}(x,y)\Bigg(\sum_{k=0}^n {n \brack k}(-1)^k q^{k\choose
2}b^kP_{n-k}(u,v)\Bigg) \frac{t^n}{(q;q)_n}\right\}\nonumber\\
&= \mathbb{F}(aD_{xy}) \left\{\sum_{n=0}^\infty
\frac{(-1)^nq^{-{n\choose
2}}P_n(u,v)P_n(x,y)t^n}{(q;q)_n}\sum_{k=0}^\infty P_k(x,q^ny)\frac{
(btq^{-n})^k}{(q;q)_k}\right\}\label{term1}.
\end{align}
The terminating  condition  $v/u=q^{-r}$ or $y/x=q^{-r}$ implies
that the first sum in (\ref{term1}) is finite.
 Utilizing  \eqref{gf-pn},  we see that (\ref{term1}) equals
\[
\sum_{n=0}^\infty \frac{(-1)^nq^{-{n\choose
2}}P_n(u,v)t^n}{(q;q)_n}\ \mathbb{F}(aD_{xy})
\left\{\frac{P_n(x,y)}{(ybtq^{-n};q)_n}\frac{(ybtq^{-n};q)_\infty}{(xbtq^{-n};q)_\infty}\right\}.
\]
Applying \eqref{opefbd} with $s\rightarrow
btq^{-n}$, the above sum equals
\begin{align}
\sum_{n=0}^\infty & \frac{(-1)^nq^{-{n\choose
2}}P_n(u,v)t^n}{(q;q)_n}
\frac{(ybt,abtq^{-n};q)_\infty}{(xbtq^{-n};q)_\infty}\sum_{k=0}^n
{n\brack k} (-1)^{k}q^{k\choose 2}
\frac{(xbtq^{-n};q)_k(y/x;q)_{n-k}}{(abtq^{-n};q)_k} x^{n-k}a^{k}
\nonumber\\[5pt]
&=\frac{(abt,ybt;q)_\infty}{(xbt;q)_\infty} \sum_{n=0}^\infty
\frac{(-1)^n q^{-{n\choose
2}}(abtq^{-n},y/x;q)_nP_{n}(u,v)(xt)^n}{(q;q)_n(xbtq^{-n};q)_n}\sum_{k=0}^\infty
P_k(u,vq^n)\frac{(atq^{-n})^k }{(q;q)_k},\label{term2}
\end{align}
Under  the terminating condition,
the above sum further simplifies to the right hand side of (\ref{mehler1}).
This completes the proof.
\end{proof}

We  remark that  the second sums in \eqref{term1} and \eqref{term2}
do not converge when $n$ tends to infinity. To avoid this problem,
we may restrict our attention to the case that $v/u=q^{-r}$ or $y/x=q^{-r}$, where $r$ is a
nonnegative integer, as noticed by   Askey and Suslov
\cite{asksus93} and Fang \cite{fang07}.

Owing to the symmetry property and the relation \eqref{rebu},
Mehler's formula for the Al-Salam-Carlitz polynomials
$u^{(a)}_n(x;q)$ given by Al-Salam and Carlitz \cite{alcar65} can be
recovered from the above theorem by setting $y/a\rightarrow a,
x/a\rightarrow x, v/b \rightarrow b, u/b\rightarrow y,
-abt\rightarrow t$.

\begin{cor}[Mehler's formula for $u^{(a)}_n(x|q)$]
\begin{align}\label{orimehler}
\sum_{n=0}^\infty &q^{-{n\choose 2}}
u^{(a)}_{n}(x;q)u^{(b)}_{n}(y;q)
\frac{t^n}{(q;q)_n}=\frac{(-t,-at,-bt;q)_\infty}{(-xt,-yt;q)_\infty}
{}_{3}\phi_2 \left(\begin{array}{c}
a/x, b/y,-q/t\\
-q/xt, -q/yt
\end{array};q,q\right),
\end{align}
where $a/x=q^{-r}$ or $b/y=q^{-r}$ for a nonnegative integer
$r$, and $\max\{|xtq^{-r}|,|ytq^{-r}|\}<1$.
\end{cor}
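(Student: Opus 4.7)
The plan is to derive the corollary as a direct substitution from Theorem~3.3, using the symmetry relation \eqref{uxy} combined with the conversion formula \eqref{rebu}. First I would rewrite each occurrence of $U_n$ in Mehler's formula \eqref{mehler1} in terms of the classical polynomial $u_n^{(\cdot)}(\cdot;q)$: by \eqref{uxy} and \eqref{rebu},
\[
U_n(x,y,a;q)=U_n(x,a,y;q)=a^n\, u_n^{(y/a)}(x/a;q),
\]
and similarly $U_n(u,v,b;q)=b^n\, u_n^{(v/b)}(u/b;q)$. So after these identifications the product on the left of \eqref{mehler1} becomes $(ab)^n u_n^{(y/a)}(x/a;q)\, u_n^{(v/b)}(u/b;q)$.

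Next I would perform the substitution $y/a\to a$, $x/a\to x$, $v/b\to b$, $u/b\to y$, $-abt\to t$ (where the variables on the right are the new names and those on the left are the ones appearing in Theorem~3.3). Under this substitution the inner polynomials turn into $u_n^{(a)}(x;q)$ and $u_n^{(b)}(y;q)$, and the weight on the left simplifies because $(-1)^n (ab)^n t^n\to (-1)^n(-t)^n=t^n$, leaving precisely
\[
\sum_{n=0}^\infty q^{-\binom{n}{2}} u_n^{(a)}(x;q)\,u_n^{(b)}(y;q)\,\frac{t^n}{(q;q)_n}.
\]

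For the right-hand side the task is a routine bookkeeping check: with the substitution the three infinite products in the numerator and denominator become $(-t,-at,-bt;q)_\infty$ and $(-xt,-yt;q)_\infty$ respectively, while the upper parameters of the ${}_3\phi_2$ transform as $y/x\to a/x$, $v/u\to b/y$ and $q/(abt)\to -q/t$, and the lower parameters as $q/(xbt)\to -q/(xt)$ and $q/(aut)\to -q/(yt)$. The terminating hypothesis $y/x=q^{-r}$ or $v/u=q^{-r}$ translates to $a/x=q^{-r}$ or $b/y=q^{-r}$, and the domain of validity $\max\{|xbtq^{-r}|,|autq^{-r}|\}<1$ becomes $\max\{|xtq^{-r}|,|ytq^{-r}|\}<1$.

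The argument is essentially algebraic verification, so there is no real obstacle beyond careful tracking of signs: the only place where something nontrivial happens is the combination $(-1)^n(-abt)^n=(abt)^n$, which cancels against the factor $(ab)^n$ pulled out from \eqref{rebu}; once this cancellation is observed the rest reduces to matching $q$-Pochhammer arguments and ${}_3\phi_2$ parameters term by term.
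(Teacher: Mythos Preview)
Your proposal is correct and follows exactly the approach indicated in the paper: use the symmetry \eqref{uxy} together with \eqref{rebu} to rewrite $U_n(x,y,a;q)=a^n u_n^{(y/a)}(x/a;q)$ (and similarly for $U_n(u,v,b;q)$), then perform the substitution $y/a\to a$, $x/a\to x$, $v/b\to b$, $u/b\to y$, $-abt\to t$ in Theorem~3.3. Your bookkeeping of the prefactor, the ${}_3\phi_2$ parameters, the terminating hypothesis and the convergence domain is accurate; the only mildly imprecise sentence is your final remark that ``$(-1)^n(-abt)^n=(abt)^n$ cancels against $(ab)^n$''---what actually happens is $(-1)^n(ab)^n t^n=(-1)^n(abt)^n\to(-1)^n(-t)^n=t^n$, as you correctly wrote earlier.
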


\section{The Cauchy Companion Operator}

In this section, we apply the Cauchy companion operator
defined by Chen \cite{chenyb} to derive the Rogers-type formulas and
Mehler's formula without the terminating condition.
Recall that the Cauchy  augmentation operator is defined by Chen
and Gu \cite{CG},
\[
T(a,b;D_q)=\sum_{n=0}^\infty \frac{(a;q)_n}{(q;q)_n} (bD_q)^n.
\]
Moreover, Chen \cite{chenyb} introduced the Cauchy companion operator
\begin{equation}\label{cauchyc}
E(a,b;\theta)=\sum_{n=0}^\infty \frac{(a;q)_n(-b\theta)^n}{(q;q)_n}.
\end{equation}

As observed by Chen \cite{chenyb}, when one applies $E(a,b;\theta)$
to the product $(cs,ct;q)_\infty/(cv;q)_\infty$, one does not get a
valid identity
 by directly using  $q$-Leibniz rule because of the
 convergence consideration. Instead,
  we may use the following expansion for $D_q^n$
\[
D_q^{n} \{f(c)\}=c^{-n}q^{-{n\choose 2}} \sum_{k=0}^n (-1)^k
{n\brack k}q^{n-k \choose 2} f(cq^k).
\]
In this way, we can deduce an alternative expansion
 for $E(a,b;\theta)$ which is convergent \cite{chenyb}
\begin{equation}\label{expansionE}
E(a,b;\theta) \{f(c)\}=\frac{(abq/c;q)_\infty}{(bq/c;q)_\infty}
\sum_{k=0}^\infty \frac{(a;q)_kf(cq^{-k})q^{k\choose
2}}{(q,abq/c;q)_k}\bigg(-\frac{bq}{c}\bigg)^k,
\end{equation}
where $|bq/c|<1$. Furthermore, we will be able to derive the
Rogers-type formulas and a Mehler's formula without the terminating
condition based on the following  operator identities
established by Chen \cite{chenyb}.

\begin{prop}\label{propE} Assume that
the operator $E(a,b;\theta)$ acts on the parameter $c$, then
\allowdisplaybreaks
\begin{align}
&E(a,b;\theta)\{c^n\}=\sum_{k=0}^n {n\brack k} (a;q)_k (-bq)^k
c^{n-k}q^{k\choose 2} q^{-nk},\quad (n\geq 0),\label{cauchy1}\\[8pt]
&E(a,b;\theta)\left\{\frac{(ct;q)_\infty}{(cv;q)_\infty}\right\}
=\frac{(ct;q)_\infty}{(cv;q)_\infty} {}_{2}\phi_1
\left(\begin{array}{c}
a, t/v\\
q/cv
\end{array};q,\frac{bq}{c}\right),\qquad (|bq/c|<1),\label{cauchy3}\\[8pt]
&E(a,b;\theta)\left\{\frac{(cs,ct;q)_\infty}{(cv;q)_\infty}\right\}
=\frac{(abq/c,cs,ct;q)_\infty}{(bq/c,cv;q)_\infty} {}_{3}\phi_2
\left(\begin{array}{c}
a, q/cs,q/ct\\
abq/c,q/cv
\end{array};q,\frac{bst}{v}\right),\nonumber \\[6pt]
&\qquad\qquad\qquad\qquad\qquad\qquad\qquad\qquad\qquad\qquad\qquad\qquad (\max\{|bq/c|,|bst/v|\}<1)
.\label{cauchy4}
\end{align}
\end{prop}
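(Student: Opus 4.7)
The proof naturally decomposes into three parts, one per identity, and I would address them in increasing order of complexity, reusing calculations from one to the next.

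For \eqref{cauchy1}, the plan is to iterate $\theta$ directly on $c^n$. Since $D_q\{c^n\}=(1-q^n)c^{n-1}$ and $\eta^{-1}\{c^{n-1}\}=q^{-(n-1)}c^{n-1}$, one immediately has $\theta\{c^n\}=q^{-(n-1)}(1-q^n)c^{n-1}$. A straightforward induction then yields
\[
\theta^m\{c^n\}=q^{m(m+1)/2-mn}\,\frac{(q;q)_n}{(q;q)_{n-m}}\,c^{n-m},
\]
which in particular vanishes for $m>n$. Substituting into the defining series of $E(a,b;\theta)$, the sum truncates at $m=n$, and collecting $(-b)^m q^{m(m+1)/2-mn}=(-bq)^m q^{\binom{m}{2}}q^{-mn}$ reproduces \eqref{cauchy1}.

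For \eqref{cauchy3}, I would first derive the commutation identity $\theta^n=q^{-\binom{n}{2}}\eta^{-n}D_q^n$, which follows by induction from the basic relation $\eta^{-1}D_q=qD_q\eta^{-1}$. Combining this with Lemma 2.2 of Zhang and Wang gives
\[
\theta^n\left\{\frac{(ct;q)_\infty}{(cv;q)_\infty}\right\} = q^{-\binom{n}{2}}\,v^n(t/v;q)_n\,\frac{(ct;q)_\infty}{(cvq^{-n};q)_\infty}.
\]
The shift formula $(xq^{-n};q)_\infty=(x;q)_\infty(q/x;q)_n(-x)^n q^{-n(n+1)/2}$ with $x=cv$ eliminates the negative shift inside the infinite product, and after substituting into the series for $E(a,b;\theta)$ the $q$-powers telescope to yield exactly $(bq/c)^n$, producing the stated ${}_{2}\phi_1$. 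For \eqref{cauchy4}, I would instead apply the alternative expansion \eqref{expansionE} to $f(c)=(cs,ct;q)_\infty/(cv;q)_\infty$. Using the same shift formula with $x=cs,ct,cv$, one obtains
\[
\frac{f(cq^{-k})}{f(c)}=\frac{(q/cs,q/ct;q)_k}{(q/cv;q)_k}\,(-c)^k(st/v)^k q^{-k(k+1)/2},
\]
and then the product of $(-bq/c)^k$, $q^{\binom{k}{2}}$ and $q^{-k(k+1)/2}(-c)^k$ collapses to $(bst/v)^k$, giving the ${}_{3}\phi_2$ on the right-hand side of \eqref{cauchy4}.

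The main obstacle is purely the bookkeeping of $q$-powers and signs in \eqref{cauchy3} and \eqref{cauchy4}: the exponents generated by eliminating the $q^{-n}$ shifts inside infinite $q$-products must match exactly the $q^{\binom{n}{2}}$ factors coming from either the expansion \eqref{expansionE} or the commutation $\theta^n=q^{-\binom{n}{2}}\eta^{-n}D_q^n$, with no leftover terms. The convergence side conditions $|bq/c|<1$ and $\max\{|bq/c|,|bst/v|\}<1$ play exactly the role of validating the use of \eqref{expansionE} and the interchange of summations it entails; outside those ranges the intermediate manipulations fail to converge even though the final basic hypergeometric series still makes formal sense.
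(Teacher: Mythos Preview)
Your proof is correct in all three parts; the bookkeeping of $q$-exponents in \eqref{cauchy3} and \eqref{cauchy4} checks out exactly as you describe.

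There is, however, nothing to compare against: the paper does not prove Proposition~\ref{propE} at all. It merely cites the identities as ``established by Chen \cite{chenyb}'' and quotes the alternative expansion \eqref{expansionE} from the same source, noting that the $q$-Leibniz rule fails on $(cs,ct;q)_\infty/(cv;q)_\infty$ for convergence reasons. Your route for \eqref{cauchy4}, going through \eqref{expansionE} and the shift identity $(xq^{-k};q)_\infty=(x;q)_\infty(q/x;q)_k(-x)^kq^{-k(k+1)/2}$, is precisely the method the paper sketches in the paragraph preceding the proposition, so in spirit you match what the authors (and presumably \cite{chenyb}) intend. Your treatment of \eqref{cauchy3} via the commutation $\theta^n=q^{-\binom{n}{2}}\eta^{-n}D_q^n$ together with Lemma~2.2 is a clean direct computation; one could alternatively attempt to pass through \eqref{expansionE} here as well, but that would land on a ${}_2\phi_2$ requiring a further transformation, so your choice is the natural one.
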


 In the light of the property \eqref{cauchy1}, we obtain the following
 operator representation of $U_n(x,y,a;q)$.

\begin{thm} Assume that the operator $E(y/x,x;\theta)$ acts on the parameter $a$, then
\begin{equation}\label{cauchyB}
E(y/x,x;\theta)\{(-1)^nq^{n\choose 2} a^n\}=U_n(x,y,a;q).
\end{equation}
\end{thm}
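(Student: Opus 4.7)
The plan is to apply the operator identity \eqref{cauchy1} in Proposition~\ref{propE} directly, with the specializations $a\mapsto y/x$ and $b\mapsto x$, and with $\theta$ acting on the variable named $a$ in the statement (which plays the role of $c$ in Proposition~\ref{propE}). Because $(-1)^n q^{\binom{n}{2}}$ is independent of $a$, it factors freely through the linear operator $E(y/x,x;\theta)$.

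First I would use \eqref{cauchy1} to write
\begin{equation*}
E(y/x,x;\theta)\bigl\{(-1)^n q^{\binom{n}{2}} a^n\bigr\}
= (-1)^n q^{\binom{n}{2}}\sum_{k=0}^n {n\brack k}(y/x;q)_k(-xq)^k a^{n-k} q^{\binom{k}{2}}q^{-nk}.
\end{equation*}
Then I would compare with the Cauchy-polynomial form \eqref{mbqjp} of $U_n(x,y,a;q)$, which, using the elementary factorization $P_{n-k}(x,y)=x^{n-k}(y/x;q)_{n-k}$, becomes $\sum_{k=0}^n {n\brack k}(-1)^k q^{\binom{k}{2}} a^k x^{n-k}(y/x;q)_{n-k}$. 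After reindexing $k\mapsto n-k$ in the operator expression, both sums carry the same combinatorial factor ${n\brack k}(-1)^k a^k x^{n-k}(y/x;q)_{n-k}$, so the entire task collapses to matching the powers of $q$.

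The only actual computation is this final $q$-exponent check: one must verify that
\begin{equation*}
\binom{n}{2}+(n-k)+\binom{n-k}{2}-n(n-k)=\binom{k}{2},
\end{equation*}
which follows immediately from $\binom{n}{2}-\binom{k}{2}=(n-k)(n+k-1)/2$ together with $\binom{n-k}{2}=(n-k)(n-k-1)/2$. This is mechanical, and I do not foresee any genuine obstacle. An alternative route would be to match \eqref{cauchy1} against the ${}_2\phi_1$ definition \eqref{mbigqjac} directly via $(q^{-n};q)_k=(q;q)_n(-1)^k q^{\binom{k}{2}-nk}/(q;q)_{n-k}$, but the arithmetic involved is essentially the same.
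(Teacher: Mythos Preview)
Your proposal is correct and follows exactly the approach the paper indicates: the paper simply states that the operator representation follows ``in the light of the property \eqref{cauchy1}'' and gives no further details, and your argument carries out precisely this computation, specializing \eqref{cauchy1} and matching against the expansion \eqref{mbqjp} via $P_{n-k}(x,y)=x^{n-k}(y/x;q)_{n-k}$. The $q$-exponent identity you isolate is the only nontrivial step and your verification of it is correct.
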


The above operator identity leads to another Rogers-type
formula for the Al-Salam-Carlitz  polynomials.

\begin{thm} We have
\allowdisplaybreaks
\begin{align}
&\sum_{n=0}^\infty \sum_{m=0}^\infty (-1)^{n}q^{-{n\choose
2}-nm}U_{n+m}(x,y,a;q) \frac{t^n}{(q;q)_n}\frac{s^m}{(q;q)_m}
=\frac{(as;q)_\infty}{(at;q)_\infty}{}_{2}\phi_1
\left(\begin{array}{c}
y/x,s/t\\
q/at
\end{array};q,\frac{xq}{a}\right), \label{Rogers15}
\end{align}
where $\max\{|at|,|xq/a|\}<1$.
\end{thm}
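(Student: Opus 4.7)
The plan parallels the proof of Theorem~\ref{rog1}, but with the Cauchy companion operator representation \eqref{cauchyB} replacing the $q$-exponential operator. Using \eqref{cauchyB}, I would write
\[
U_{n+m}(x,y,a;q)=E(y/x,x;\theta)\bigl\{(-1)^{n+m}q^{{n+m\choose 2}}a^{n+m}\bigr\},
\]
where $\theta$ acts on the parameter $a$, and then pull the linear operator $E(y/x,x;\theta)$ outside of the double summation on the left-hand side of \eqref{Rogers15}.

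The next step is purely algebraic. The sign $(-1)^n$ in the summand combines with $(-1)^{n+m}$ coming from \eqref{cauchyB} to give $(-1)^m$, and a short computation yields the key identity
\[
-{n\choose 2}-nm+{n+m\choose 2}={m\choose 2}.
\]
Consequently, the expression inside the operator factors as a product of two single sums,
\[
\sum_{n=0}^\infty\frac{(at)^n}{(q;q)_n}\cdot\sum_{m=0}^\infty\frac{(-1)^mq^{{m\choose 2}}(as)^m}{(q;q)_m}=\frac{(as;q)_\infty}{(at;q)_\infty},
\]
by the two Euler identities $1/(z;q)_\infty=\sum z^n/(q;q)_n$ and $(z;q)_\infty=\sum(-1)^nq^{{n\choose 2}}z^n/(q;q)_n$; the first sum converges precisely when $|at|<1$.

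Finally, I would apply the operator identity \eqref{cauchy3} to $(as;q)_\infty/(at;q)_\infty$, identifying the acting variable $c$ of \eqref{cauchy3} with $a$, the constants $t,v$ in \eqref{cauchy3} with $s,t$ respectively, and the $E$-parameters $a,b$ in \eqref{cauchy3} with $y/x, x$. This produces exactly the claimed right-hand side, and the convergence hypothesis $|bq/c|<1$ of \eqref{cauchy3} becomes the stated requirement $|xq/a|<1$. The main technical obstacle is not a deep identity but rather the parameter bookkeeping—since the letters $a,b,c,s,t,v$ play different roles in \eqref{cauchy3} than in the present theorem, one must keep careful track of which symbol plays which role—together with the standard justification that the operator-summation interchange is legitimate, which follows from absolute convergence under the two stated hypotheses $|at|<1$ and $|xq/a|<1$.
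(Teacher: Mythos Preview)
Your proposal is correct and follows essentially the same route as the paper: apply \eqref{cauchyB}, use the binomial-coefficient identity ${n+m\choose 2}-{n\choose 2}-nm={m\choose 2}$ to factor the inner double sum into two Euler products yielding $(as;q)_\infty/(at;q)_\infty$, and finish with \eqref{cauchy3}. The only difference is expository---you spell out the sign and exponent bookkeeping that the paper leaves implicit.
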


\pf By  \eqref{cauchyB}, the left hand side of
\eqref{Rogers15} can be written as
\begin{align*}
&\sum_{n=0}^\infty \sum_{m=0}^\infty (-1)^{n}q^{-{n\choose 2}-nm}
U_{n+m}(x,y,a;q)
\frac{t^n}{(q;q)_n}\frac{s^m}{(q;q)_m}\\
&\quad=E(y/x,x;\theta)\left\{\sum_{n=0}^\infty \sum_{m=0}^\infty
(-1)^{m}q^{m\choose 2}
\frac{a^nt^n}{(q;q)_n}\frac{a^ms^m}{(q;q)_m}\right\}\\
&\quad=E(y/x,x;\theta)\left\{\sum_{n=0}^\infty
\frac{a^nt^n}{(q;q)_n}\sum_{m=0}^\infty (-1)^m q^{m\choose
2}\frac{(as)^m}{(q;q)_m}\right\}\quad (|at|<1)\\
&\quad=E(y/x,x;\theta)\left\{\frac{(as;q)_\infty}{(at;q)_\infty}\right\}.
\end{align*}
Using \eqref{cauchy3}, we complete the proof.  \qed

Applying  the operator $E(a,b;\theta)$ one more time,  we obtain
following triple sum identity.

\begin{thm} We have
\begin{align}\label{triple}
&\sum_{n=0}^\infty\sum_{m=0}^\infty\sum_{k=0}^\infty
(-1)^{k}q^{-{k\choose 2}-(m+n)k-mn}U_{n+m+k}(x,y,a;q)
\frac{t^n}{(q;q)_n}\frac{s^m}{(q;q)_m}\frac{v^k}{(q;q)_k}\nonumber\\
&\qquad=\frac{(yq/a,as,at;q)_\infty}{(xq/a,av;q)_\infty}
{}_{3}\phi_2 \left(\begin{array}{c}
y/x,q/as,q/at\\
yq/a,q/av
\end{array};q,\frac{xst}{v}\right),
\end{align}
provided that $\max\{|av|,|xq/a|,|xst/v|\}<1$.
\end{thm}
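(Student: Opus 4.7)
The plan is to mirror the argument used for the previous Rogers-type formula, but now apply the Cauchy companion operator $E(y/x,x;\theta)$ (acting on the parameter $a$) to a product of three geometric-type sums. The key is to recognize that the awkward looking factor $(-1)^k q^{-\binom{k}{2}-(m+n)k-mn}$ on the left-hand side is exactly what is needed, after invoking the operator representation \eqref{cauchyB}, to collapse the triple sum into a clean infinite product.

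First I would use \eqref{cauchyB} to write
\[
U_{n+m+k}(x,y,a;q)=E(y/x,x;\theta)\bigl\{(-1)^{n+m+k}q^{\binom{n+m+k}{2}}a^{n+m+k}\bigr\}
\]
and pull the operator outside the triple sum. The crucial algebraic identity I would then verify is
\[
\binom{n+m+k}{2}=\binom{n}{2}+\binom{m}{2}+\binom{k}{2}+nm+(n+m)k,
\]
together with $(-1)^{n+m+2k}=(-1)^{n+m}$, so that the prefactors combine to give simply $(-1)^{n+m}q^{\binom{n}{2}+\binom{m}{2}}$. This converts the left-hand side of \eqref{triple} into
\[
E(y/x,x;\theta)\left\{\sum_{n,m,k\ge 0}\frac{(-1)^{n+m}q^{\binom{n}{2}+\binom{m}{2}}(at)^n(as)^m(av)^k}{(q;q)_n(q;q)_m(q;q)_k}\right\}.
\]

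Next I would factor this triple sum as a product of three single sums and evaluate each using the standard $q$-exponential identities
\[
\sum_{n\ge 0}\frac{(-1)^n q^{\binom{n}{2}}z^n}{(q;q)_n}=(z;q)_\infty,\qquad \sum_{k\ge 0}\frac{z^k}{(q;q)_k}=\frac{1}{(z;q)_\infty}\qquad(|z|<1),
\]
obtaining
\[
E(y/x,x;\theta)\left\{\frac{(as,at;q)_\infty}{(av;q)_\infty}\right\}.
\]
Finally, I would apply the operator identity \eqref{cauchy4} with the parameter $c$ specialized to $a$ (and the roles of the abstract parameters $a,b$ in that proposition played by $y/x$ and $x$ respectively). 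The prefactor $(abq/c;q)_\infty/(bq/c;q)_\infty$ becomes $(yq/a;q)_\infty/(xq/a;q)_\infty$, the upper parameter $abq/c$ in the $_3\phi_2$ becomes $yq/a$, and the argument $bst/v$ becomes $xst/v$, matching the right-hand side of \eqref{triple} exactly.

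The main obstacle is less conceptual than bookkeeping: one must carefully check that the convergence hypotheses permit interchanging the operator with the triple sum and splitting it into the three univariate series. Specifically the condition $|av|<1$ is needed to sum the $k$-series as $1/(av;q)_\infty$, while $|bq/c|=|xq/a|<1$ and $|bst/v|=|xst/v|<1$ are required to invoke \eqref{cauchy4}; these are precisely the hypotheses imposed in the statement. Once the convergence is justified, everything else is a direct specialization.
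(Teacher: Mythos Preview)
Your proposal is correct and follows essentially the same route as the paper: apply the operator representation \eqref{cauchyB} to pull $E(y/x,x;\theta)$ outside the triple sum, use the binomial identity to collapse the summand to $(-1)^{n+m}q^{\binom{n}{2}+\binom{m}{2}}(at)^n(as)^m(av)^k$, evaluate the resulting product of $q$-exponentials as $(as,at;q)_\infty/(av;q)_\infty$, and finish with \eqref{cauchy4}. Your write-up is in fact more explicit than the paper's about the binomial-coefficient bookkeeping and the role of each convergence hypothesis, which is a plus.
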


\pf By  the operator identity \eqref{cauchyB},  the left
hand side of \eqref{triple} equals
\begin{align*}
&E(y/x,x;\theta)\left\{\sum_{n=0}^\infty\sum_{m=0}^\infty\sum_{k=0}^\infty
(-1)^{m+n}q^{{n\choose 2}+{m\choose 2}}
\frac{(at)^n}{(q;q)_n}\frac{(as)^m}{(q;q)_m}\frac{(av)^k}{(q;q)_k}\right\}\quad (|av|<1)\\
&\quad=E(y/x,x;\theta)\left\{\frac{(as,at;q)_\infty}{(av;q)_\infty}\right\}.
\end{align*}
Applying the operator identity
\eqref{cauchy4}, we complete the proof.  \qed

Setting $s\rightarrow 0$ and $t\rightarrow s, v\rightarrow t$ and
applying Jackson's transformation formula \cite[III.4]{gasrah}, the
triple sum \eqref{triple} reduces to the Rogers-type formula
\eqref{Rogers15}.
The Cauchy companion operator also applies to other Rogers-type
formulas for the Al-Salam-Carlitz polynomials, including the one
given in the previous section. Moreover, we can also derive the
following Rogers-type formula \begin{align*} \sum_{n=0}^\infty
\sum_{m=0}^\infty &
q^{-mn}U_{n+m}(x,y,a;q)\frac{t^n}{(q;q)_n}\frac{s^m}{(q;q)_m}
\\[3pt]
& =\frac{(yq/a,as,at;q)_\infty}{(xq/a;q)_\infty}{}_{3}\phi_1
\left(\begin{array}{c}
y/x,q/as,q/at\\
yq/a
\end{array};q,\frac{axst}{q}\right),
\end{align*}
where $\max\{|xq/a|, |axst/q|\}<1$. It should be noticed that the
above formula is not a consequence of the Rogers formula for the
bivariate Rogers-Szeg\"{o} polynomials $h_n(x,y|q)$
\cite[Theorem\,3.1]{chenss07} by replacing $q$ with $q^{-1}$.

We now present  Mehler's formula without the
 terminating condition.

\begin{thm} We have
\begin{align}\label{mehlerE}
&\sum_{n=0}^\infty (-1)^n q^{-{n\choose
2}}U_n(x,y,a;q)U_n(u,v,b;q)\frac{t^n}{(q;q)_n}
\nonumber\\
&\qquad=\frac{(yq/a,abt,avt;q)_\infty}{(xq/a,aut;q)_\infty}
{}_{3}\phi_2 \left(\begin{array}{c}
y/x,q/abt,q/avt\\
yq/a,q/aut
\end{array};q,\frac{xbvt}{u}\right),
\end{align}
provided that $\max\{|aut|,|xq/a|,|xbvt/u|\}<1$.
\end{thm}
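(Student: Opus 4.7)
The plan is to push both polynomial factors inside a single application of the Cauchy companion operator $E(y/x,x;\theta)$ acting on the parameter $a$, then collapse the resulting series via the generating function \eqref{gf-mbjp}, and finally invoke the operator identity \eqref{cauchy4} to land on the desired ${}_3\phi_2$ evaluation.

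First I would use the operator representation \eqref{cauchyB}, which says $U_n(x,y,a;q)=E(y/x,x;\theta)\{(-1)^n q^{\binom{n}{2}} a^n\}$, to rewrite the left-hand side of \eqref{mehlerE}. Since $E(y/x,x;\theta)$ acts only on $a$ and not on $u,v,b,t$, and since the series converges for $|at|$ small enough to allow interchange of sum and operator, I obtain
\begin{align*}
\sum_{n=0}^\infty &(-1)^n q^{-\binom{n}{2}}U_n(x,y,a;q)U_n(u,v,b;q)\frac{t^n}{(q;q)_n}\\
&=E(y/x,x;\theta)\left\{\sum_{n=0}^\infty (-1)^n q^{-\binom{n}{2}}(-1)^n q^{\binom{n}{2}}a^n\,U_n(u,v,b;q)\frac{t^n}{(q;q)_n}\right\}\\
&=E(y/x,x;\theta)\left\{\sum_{n=0}^\infty U_n(u,v,b;q)\frac{(at)^n}{(q;q)_n}\right\}.
\end{align*}
The sign factors $(-1)^n q^{\pm\binom{n}{2}}$ cancel exactly, which is the whole point of inserting $\eqref{cauchyB}$ into a Mehler-type sum.

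Next, applying the classical generating function \eqref{gf-mbjp} for the Al-Salam-Carlitz polynomials with $|aut|<1$, the inner sum equals $(abt,avt;q)_\infty/(aut;q)_\infty$. Thus the left-hand side of \eqref{mehlerE} is
\[
E(y/x,x;\theta)\left\{\frac{(abt,avt;q)_\infty}{(aut;q)_\infty}\right\}.
\]
Now I would invoke the operator identity \eqref{cauchy4} with parameters matched as $a\mapsto y/x$, $b\mapsto x$, $c\mapsto a$, and with the three arguments in \eqref{cauchy4} taken to be $s\mapsto bt$, $t\mapsto vt$, $v\mapsto ut$. A direct substitution gives $abq/c=yq/a$, $bq/c=xq/a$, $q/cs=q/abt$, $q/ct=q/avt$, $q/cv=q/aut$, and $bst/v=xbvt/u$, yielding exactly the right-hand side of \eqref{mehlerE}.

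The only thing that needs genuine care is the interchange of summation and operator, together with the convergence of the expansion \eqref{expansionE} that underpins \eqref{cauchy4}. The hypothesis $|xq/a|<1$ guarantees that \eqref{expansionE} is a valid convergent expansion on the shifted argument $aq^{-k}$; the hypothesis $|xbvt/u|<1$ is exactly the convergence requirement for the resulting ${}_3\phi_2$; and the hypothesis $|aut|<1$ licenses the use of \eqref{gf-mbjp} at each evaluation point. These three conditions are precisely the ones stated in the theorem, so the argument goes through without any terminating restriction, in contrast to the derivation based on $\mathbb{F}(aD_{xy})$ in Section 3.
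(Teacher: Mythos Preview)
Your proof is correct and follows essentially the same approach as the paper: insert the operator representation \eqref{cauchyB}, collapse the inner sum with the generating function \eqref{gf-mbjp}, and finish with the operator identity \eqref{cauchy4}. You have spelled out the parameter matching for \eqref{cauchy4} and the role of each convergence hypothesis more explicitly than the paper does, but the argument is the same.
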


\pf Using \eqref{cauchyB} and  \eqref{gf-mbjp}, we find
\begin{align*}
&\sum_{n=0}^\infty (-1)^n q^{-{n\choose
2}}U_n(x,y,a;q)U_n(u,v,b;q)\frac{t^n}{(q;q)_n}
\nonumber\\
&\quad=E(y/x,x;\theta)\left\{\sum_{n=0}^\infty
U_n(u,v,b;q)\frac{(at)^n}{(q;q)_n}\right\}\qquad (|aut|<1)\\
&\quad=E(y/x,x;\theta)\left\{\frac{(avt;abt;q)_\infty}{(aut;q)_\infty}\right\}.
\end{align*}
So the proof is completed by using the operator identity
\eqref{cauchy4}.  \qed

Comparing the above Mehler's formula with the terminating form
\eqref{mehler1}, it leads to the following transformation formula
for ${}_3\phi_2$ series.

\begin{cor} We have
\begin{align}\label{trans}
{}_{3}\phi_2 \left(\begin{array}{c}
y/x,q/abt,q/avt\\
yq/a,q/aut
\end{array};q,\frac{xbvt}{u}\right)=\frac{(ybt,xq/a;q)_\infty}{(xbt,yq/a;q)_\infty} {}_{3}\phi_2
\left(\begin{array}{c}
y/x, v/u, q/abt\\
q/xbt, q/aut
\end{array};q,q\right),
\end{align}
provided that $v/u=q^{-r}$ for a nonnegative integer $r$.
\end{cor}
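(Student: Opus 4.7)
The plan is to observe that this corollary is essentially immediate from comparing the two Mehler's formulas established earlier, namely the terminating version \eqref{mehler1} and the non-terminating version \eqref{mehlerE}. Both give closed-form evaluations of the same bilinear generating function
\[
\sum_{n=0}^\infty (-1)^n q^{-{n\choose 2}}U_n(x,y,a;q)U_n(u,v,b;q)\frac{t^n}{(q;q)_n},
\]
so when their common domain of validity is non-empty, the two right-hand sides must be equal. The hypothesis $v/u=q^{-r}$ ensures that the terminating condition for \eqref{mehler1} is met, so both evaluations apply simultaneously (assuming the convergence conditions for \eqref{mehlerE} are also in force, which can be arranged by restricting the remaining parameters).

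Concretely, I would set the right-hand side of \eqref{mehler1} equal to the right-hand side of \eqref{mehlerE}:
\[
\frac{(abt,ybt,avt;q)_\infty}{(xbt,aut;q)_\infty}\, {}_{3}\phi_2\!\left(\!\begin{array}{c}y/x,\,v/u,\,q/abt\\ q/xbt,\,q/aut\end{array};q,q\!\right)=\frac{(yq/a,abt,avt;q)_\infty}{(xq/a,aut;q)_\infty}\, {}_{3}\phi_2\!\left(\!\begin{array}{c}y/x,\,q/abt,\,q/avt\\ yq/a,\,q/aut\end{array};q,\tfrac{xbvt}{u}\!\right).
\]
Both sides share the common factor $(abt,avt;q)_\infty/(aut;q)_\infty$. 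Canceling this factor, moving the infinite products $(ybt;q)_\infty$ and $(yq/a;q)_\infty/(xq/a;q)_\infty$ to the appropriate sides, and rearranging yields exactly the asserted transformation \eqref{trans}.

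Since \eqref{mehler1} and \eqref{mehlerE} have already been proved, there is no new computation required: the whole argument is one line of algebraic manipulation of infinite $q$-products. The only subtlety to mention is the compatibility of the convergence conditions, namely $\max\{|xbtq^{-r}|,|autq^{-r}|\}<1$ from \eqref{mehler1} together with $\max\{|aut|,|xq/a|,|xbvt/u|\}<1$ from \eqref{mehlerE}; but both ${}_3\phi_2$ series in \eqref{trans} are terminating under the hypothesis $v/u=q^{-r}$, so the identity holds as a polynomial identity in the remaining free parameters and the analytic conditions can be dropped once the equality has been verified on an open set. This last observation is really the only ``hard'' part of the proof, and it is standard for identities among terminating basic hypergeometric series.
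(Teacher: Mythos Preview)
Your approach is exactly the one the paper uses: the corollary is obtained by equating the right-hand sides of the two Mehler formulas \eqref{mehler1} and \eqref{mehlerE}, then cancelling the common infinite products. Your algebra is correct and the derivation matches the paper's one-line argument.

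There is, however, a slip in your closing remark. Under the hypothesis $v/u=q^{-r}$, only the \emph{right-hand} ${}_3\phi_2$ in \eqref{trans} terminates (it has $v/u$ as a numerator parameter). The \emph{left-hand} ${}_3\phi_2$ has numerator parameters $y/x,\,q/abt,\,q/avt$, none of which is forced to be a negative power of $q$, so that series is in general an honest infinite sum and requires $|xbvt/u|<1$ for convergence. Hence the identity is not a polynomial identity in all free parameters, and you cannot drop the analytic conditions by the argument you gave. The identity should be read as holding whenever the left-hand side converges; analytic continuation in the remaining parameters (not reduction to a polynomial identity) is what extends it beyond the intersection of the two convergence regions.
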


\section{Generating Functions for Products of $U_n(x,y,a;q)$}

The objective of this section is to give several generating function
formulas for
products of the Al-Salam-Carlitz polynomials by using the
Cauchy companion operator.
Keep in mind that the
Al-Salam-Carlitz polynomials are extensions of the Rogers-Szeg\"{o}
polynomials defined by
\[
g_n(a|q) =\sum_{k=0}^n {n \brack k} q^{k(k-n)} a^k.
\]
It is easily seen that
\[
 U_n(0,1,a;q)=(-1)^nq^{n\choose 2} g_n(a|q).
 \]

\begin{thm} \label{novelgf} We have
\begin{align*}
&\sum_{n=0}^\infty\sum_{m=0}^\infty (-1)^{n+m}q^{-{n+m\choose 2}}
U_{n+m}(x,y,a;q)U_n(u,v,b;q)U_m(z,w,c;q) \frac{t^n}{(q;q)_n}
\frac{s^m}{(q;q)_m}\\[5pt]
&\quad=\frac{(yq/a,abt,avt,acs,aws;q)_\infty}{(xq/a,aut,azs;q)_\infty}
{}_{5}\phi_3 \left(\begin{array}{c}
y/x,q/abt,q/avt,q/acs,q/aws\\
yq/a,q/aut,q/azs
\end{array};q,\frac{xabcvwts}{uzq}\right),
\end{align*}
provided that $\max\{|xq/a|,|aut|,|azs|\}<1$.
\end{thm}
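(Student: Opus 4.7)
The proof will parallel the derivation of Mehler's formula \eqref{mehlerE}, using the Cauchy companion operator $E(y/x,x;\theta)$ acting on the parameter $a$. By linearity, the identity \eqref{cauchyB} yields
\[
E(y/x,x;\theta)\{a^{n+m}\}=(-1)^{n+m}q^{-{n+m\choose 2}}U_{n+m}(x,y,a;q).
\]
First I would pull this operator outside the double sum on the left-hand side of the theorem and split $a^{n+m}t^ns^m=(at)^n(as)^m$, so that the two summation indices decouple. Since neither $U_n(u,v,b;q)$ nor $U_m(z,w,c;q)$ depends on $a$, the generating function \eqref{gf-mbjp} (applied under $|aut|<1$ and $|azs|<1$) evaluates each inner sum in closed form, reducing the left-hand side to
\[
E(y/x,x;\theta)\!\left\{\frac{(abt,avt,acs,aws;q)_\infty}{(aut,azs;q)_\infty}\right\}.
\]

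The remaining task is to apply $E(y/x,x;\theta)$ to this ``four-over-two'' product of $q$-shifted factorials, which is the direct generalization of the operator identity \eqref{cauchy4}. I would derive it by invoking the absolutely convergent expansion \eqref{expansionE} with $c=a$, and rewriting each factor $(aXq^{-k};q)_\infty$ occurring in $f(aq^{-k})$ via the standard identity $(aXq^{-k};q)_\infty=(aX;q)_\infty(-aX)^kq^{-{k+1\choose 2}}(q/aX;q)_k$, with $X$ running over $bt,vt,cs,ws$ (numerator) and $ut,zs$ (denominator). This pulls the prefactor $(abt,avt,acs,aws;q)_\infty/(aut,azs;q)_\infty$ out of the sum and leaves the ratio $(q/abt,q/avt,q/acs,q/aws;q)_k/(q/aut,q/azs;q)_k$ in the $k$th term, which combined with the $(y/x;q)_k/(q,yq/a;q)_k$ from \eqref{expansionE} produces exactly the five numerator and three denominator Pochhammer parameters of the ${}_5\phi_3$ in the statement.

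The main obstacle is the bookkeeping of signs and powers of $q$. The expansion \eqref{expansionE} contributes $q^{k\choose 2}(-xq/a)^k$; the four numerator rewrites contribute $q^{-4{k+1\choose 2}}\prod(-aX)^k$, while the two denominator rewrites (being reciprocated) contribute $q^{+2{k+1\choose 2}}\prod(-aX)^{-k}$. Collecting these, the net $q$-exponent becomes ${k\choose 2}-2{k+1\choose 2}=-{k\choose 2}-2k$, which must be absorbed together with the accumulated signs and monomial factor to produce exactly $(-1)^kq^{-{k\choose 2}}\bigl[xabcvwts/(uzq)\bigr]^k$, as required by the ${}_5\phi_3$ convention of \eqref{gHei}. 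Careful tracking of these cancellations, culminating in the identification of the residual monomial ratio as $xabcvwts/(uzq)$, completes the proof, with the hypotheses $|aut|<1$, $|azs|<1$ and $|xq/a|<1$ ensuring the validity of each analytic step.
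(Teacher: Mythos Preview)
Your proposal is correct and follows exactly the same route as the paper: apply \eqref{cauchyB} to pull $E(y/x,x;\theta)$ outside, collapse the two inner sums via the generating function \eqref{gf-mbjp}, and then evaluate $E(y/x,x;\theta)$ on the resulting four-over-two product using the expansion \eqref{expansionE}. In fact you supply more detail than the paper does---the paper stops at the $k$-sum coming from \eqref{expansionE} and simply declares it ``as desired,'' whereas you carry out the rewriting of each $(aXq^{-k};q)_\infty$ and the sign/$q$-power bookkeeping that identifies the ${}_5\phi_3$ explicitly.
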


\pf By  the operator identity \eqref{cauchyB} acting on the
parameter $a$, we obtain
\begin{align}
&\sum_{n=0}^\infty\sum_{m=0}^\infty (-1)^{n+m}q^{-{n+m\choose 2}}
U_{n+m}(x,y,a;q)U_n(u,v,b;q)U_m(z,w,c;q) \frac{t^n}{(q;q)_n}
\frac{s^m}{(q;q)_m}\nonumber \\[5pt]
&\quad=E(y/x,x;\theta)\left\{\sum_{n=0}^\infty\sum_{m=0}^\infty
U_n(u,v,b;q)U_m(z,w,c;q) \frac{(at)^n}{(q;q)_n}
\frac{(as)^m}{(q;q)_m} \right\}. \label{ey}
\end{align}
Employing the generating function \eqref{gf-mbjp} with
$\max\{|aut|,|azs|\}<1$ and the operator identity \eqref{expansionE}
with $|xq/a|<1$, we see   that (\ref{ey})  equals
\begin{align*}
&E(y/x,x;\theta)\left\{\frac{(abt,avt;q)_\infty
(acs,aws;q)_\infty}{(aut;q)_\infty(azs;q)_\infty} \right\}\\[5pt]
&\quad=\frac{(yq/a;q)_\infty}{(xq/a;q)_\infty} \sum_{k=0}^\infty
\frac{(y/x;q)_kq^{k\choose 2}}{(q, yq/a;q)_k}
\Big(-\frac{xq}{a}\Big)^k
\frac{(abtq^{-k},avtq^{-k},
acsq^{-k},awsq^{-k};q)_\infty}{(autq^{-k},azsq^{-k};q)_\infty},
\end{align*}
as desired. This completes the proof. \qed

Setting $x,u,z\rightarrow 0$ and $y,v,w\rightarrow 1$, the above
theorem reduces to the following generating function
formula for the Rogers-Szeg\"{o} polynomials $g_n(x|q)$.

\begin{thm} We have
\begin{align}
&\sum_{n=0}^\infty\sum_{m=0}^\infty (-1)^{n+m}q^{{n\choose
2}+{m\choose 2}} g_{n+m}(a|q)g_n(b|q)g_m(c|q) \frac{t^n}{(q;q)_n}
\frac{s^m}{(q;q)_m}\nonumber \\[5pt]
&\quad=(q/a,at,abt,as,acs;q)_\infty\ {}_{4}\phi_1
\left(\begin{array}{c}
q/at,q/abt,q/as,q/acs\\
q/a
\end{array};q,\frac{a^3bct^2s^2}{q^3}\right). \label{gmn}
\end{align}
\end{thm}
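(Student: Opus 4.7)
The plan is to obtain the claim as the specialization $x=u=z=0$, $y=v=w=1$ of Theorem~\ref{novelgf}, converting each Al--Salam--Carlitz polynomial on the left-hand side into a Rogers--Szeg\"o polynomial via the identity $U_n(0,1,a;q)=(-1)^n q^{n\choose 2}g_n(a|q)$ noted immediately before the statement. After substitution, the product of signs and $q$-exponents on each summand is $(-1)^{n+m}q^{-{n+m\choose 2}}\cdot(-1)^{n+m}q^{n+m\choose 2}\cdot(-1)^{n}q^{n\choose 2}\cdot(-1)^{m}q^{m\choose 2}$, which collapses to exactly $(-1)^{n+m}q^{{n\choose 2}+{m\choose 2}}$, reproducing the left-hand side of the target identity.

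On the right-hand side of Theorem~\ref{novelgf}, the prefactor simplifies immediately: the denominator factors $(xq/a;q)_\infty$, $(aut;q)_\infty$, $(azs;q)_\infty$ each become $(0;q)_\infty=1$, while the numerator becomes $(q/a,abt,at,acs,as;q)_\infty$, matching the claim. The substantive step is the limit of the ${}_5\phi_3$ series. Three of its parameters---the numerator parameter $y/x=1/x$ and the denominator parameters $q/aut$ and $q/azs$---diverge under the substitution, while the series argument $xabcvwts/(uzq)$ degenerates to $xabcts/(uzq)\to 0$. I would apply the standard asymptotic $(A;q)_k\sim(-A)^k q^{k\choose 2}$ as $A\to\infty$ term by term. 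The powers of $x$, $u$, $z$ pulled out of these three $q$-shifted factorials cancel exactly against the corresponding powers in the $k$-th power of the argument, leaving the finite effective argument $a^3bct^2s^2/q^3$ along with the surviving numerator parameters $q/abt$, $q/at$, $q/acs$, $q/as$ and the lone surviving denominator parameter $q/a$.

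The main point demanding care is the bookkeeping of the $q^{k\choose 2}$ and $(-1)^k$ factors produced by three simultaneous parameter limits. The asymptotic contributes one copy of $q^{k\choose 2}$ from the numerator and two copies from the denominator parameters, giving a net $q^{-{k\choose 2}}$, together with a net sign $(-1)^k$. These must combine with the intrinsic factor $[(-1)^k q^{k\choose 2}]^{1+3-5}=[(-1)^k q^{k\choose 2}]^{-1}$ from the ${}_5\phi_3$ convention to produce the $[(-1)^k q^{k\choose 2}]^{1+1-4}=[(-1)^k q^{k\choose 2}]^{-2}$ factor demanded by the ${}_4\phi_1$ convention. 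A careful tally confirms the match, and the identity follows. This bookkeeping across the three parameter limits is the only real obstacle; everything else in the argument is routine.
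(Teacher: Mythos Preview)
Your approach is exactly the one taken in the paper: the identity is obtained by specializing Theorem~\ref{novelgf} with $x,u,z\to 0$ and $y,v,w\to 1$, using $U_n(0,1,a;q)=(-1)^n q^{n\choose 2}g_n(a|q)$. The paper records this specialization in a single line without details, whereas you have carried out (correctly) the bookkeeping of the three simultaneous parameter limits in the ${}_5\phi_3$ and the matching of the $[(-1)^k q^{k\choose 2}]$ conventions; nothing further is needed.
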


It should be noted that Cao has considered
the same generating function and
obtained a double summation formula,
see \cite[Theorem\,4.4]{cao}. Using similar arguments, we can
derive several other generating function formulas for products
of $U_n(x,y, a;q)$. The detailed proofs are omitted.

\begin{thm} Assume $\max\{|xq/a|,|aut|\}<1$. We have
\begin{align*}
\sum_{n=0}^\infty & (-1)^{n+m}q^{-{n+m\choose 2}}U_{n+m}(x,y,a;q)
U_n(u,v,b;q)\frac{t^n}{(q;q)_n}\\
&=\frac{(yq/a,abt,avt;q)_\infty}{(xq/a,aut;q)_\infty}\,a^m\,
{}_{3}\phi_2 \left(\begin{array}{c}
y/x,q/abt,q/avt\\
yq/a,q/aut
\end{array};q,\frac{xbvt}{uq^{m}}\right).
\end{align*}
\end{thm}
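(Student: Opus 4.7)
The plan is to parallel the proof of Mehler's formula \eqref{mehlerE} by applying the Cauchy companion operator $E(y/x,x;\theta)$ to the variable $a$, with the novelty that an extra monomial factor $a^m$ will remain inside the operator. By linearity, identity \eqref{cauchyB} gives
\[ E(y/x,x;\theta)\{a^{n+m}\} = (-1)^{n+m}q^{-{n+m\choose 2}}U_{n+m}(x,y,a;q), \]
so after pulling the operator outside the sum over $n$, the left-hand side of the theorem becomes
\[ E(y/x,x;\theta)\Bigl\{a^m\sum_{n=0}^{\infty}U_n(u,v,b;q)\,\frac{(at)^n}{(q;q)_n}\Bigr\}. \]
Since $|aut|<1$, the generating function \eqref{gf-mbjp} evaluates the inner sum, reducing the left-hand side to
\[ E(y/x,x;\theta)\Bigl\{a^m\,\frac{(abt,avt;q)_\infty}{(aut;q)_\infty}\Bigr\}. \]

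Because of the extra monomial $a^m$, the closed form \eqref{cauchy4} does not apply directly; instead I would use the convergent expansion \eqref{expansionE} with $a\to y/x$, $b\to x$, $c\to a$, applied to the function $f(a)=a^m(abt,avt;q)_\infty/(aut;q)_\infty$. The substitution $a\mapsto aq^{-k}$ inside $f$ produces a factor $a^m q^{-mk}$ together with three shifted infinite Pochhammer symbols, each of which I would convert using the standard reversion identity $(cq^{-k};q)_\infty = (-c)^k q^{-{k+1\choose 2}}(q/c;q)_k(c;q)_\infty$ for $c\in\{abt,avt,aut\}$. After collecting the powers of $q$ and of $-1$, the $k$-th summand simplifies, up to an overall prefactor $a^m(yq/a,abt,avt;q)_\infty/(xq/a,aut;q)_\infty$, to $\frac{(y/x,q/abt,q/avt;q)_k}{(q,yq/a,q/aut;q)_k}\bigl(\frac{xbvt}{uq^m}\bigr)^k$; summing on $k$ is precisely the claimed ${}_3\phi_2$ identity.

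The main obstacle is pure bookkeeping: three applications of the reversion identity each contribute $(-c)^k q^{-{k+1\choose 2}}$, and these must correctly combine with the $(-xq/a)^k q^{{k\choose 2}}$ coming from \eqref{expansionE} and the $q^{-mk}$ from $(aq^{-k})^m$. Compared with the proof of \eqref{mehlerE}, the only new ingredient is this $q^{-mk}$ factor, and it is precisely what shifts the base of the ${}_3\phi_2$ from $xbvt/u$ to $xbvt/(uq^m)$; once this shift is accounted for, the rest of the simplification is routine and the convergence condition $|xbvt/(uq^m)|<1$ can be read off from the hypotheses $|xq/a|<1$ and $|aut|<1$.
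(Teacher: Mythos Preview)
Your approach is correct and matches the paper's: the paper omits the proof but indicates it uses ``similar arguments'' to Theorem~\ref{novelgf}, namely representing $U_{n+m}$ via \eqref{cauchyB}, summing with \eqref{gf-mbjp}, and then applying the expansion \eqref{expansionE} to the resulting function of $a$; your bookkeeping with the reversion identity is accurate and the extra $q^{-mk}$ from $(aq^{-k})^m$ is exactly what shifts the argument to $xbvt/(uq^m)$. One small slip: the convergence condition $|xbvt/(uq^m)|<1$ does \emph{not} follow from $|xq/a|<1$ and $|aut|<1$ alone (since $b,v$ are free), so it is an additional implicit hypothesis rather than a consequence---the paper's statement is equally silent on this point.
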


Letting $x,u\rightarrow 0, y,v\rightarrow 1$ and  applying the
transformation formula for ${}_2\phi_1$ series
\cite[Appendix\,III.2]{gasrah}, we are led to the
following formula due to Cao \cite[Theorem\,4.1]{cao},
\begin{align*}
\sum_{n=0}^\infty & (-1)^{n}q^{n\choose 2}g_{n+m}(a|q)
g_n(b|q)\frac{t^n}{(q;q)_n}\\
&=\frac{(abt,at,bt,t;q)_\infty}{(abt^2/q;q)_\infty}
\frac{(q/t;q)_m}{(bt/q)^m(q^2/abt^2;q)_m} {}_{2}\phi_1
\left(\begin{array}{c}
q^{-m},q/abt\\
tq^{-m}
\end{array};q,bt\right).
\end{align*}

 \begin{thm} Assume $\max\{|xq/a|,|uq/b|\}<1$. We have
 \begin{align*}
\sum_{n=0}^\infty & \sum_{m=0}^\infty \sum_{k=0}^\infty (-1)^{k}q^{-{k\choose 2}-(n+m)k}
U_{n+k}(x,y,a;q)U_{m+k}(u,v,b;q)\frac{t^n}{(q;q)_n}\frac{s^m}{(q;q)_m}\frac{z^k}{(q;q)_k}\\
&=\frac{(yq/a,vq/b,bs,at,abz;q)_\infty}{(xq/a,uq/b;q)_\infty}\sum_{k=0}^\infty
\frac{(-1)^kq^{-{k\choose
2}}(y/x,q/at,q/abz;q)_k}{(q,yq/a;q)_k}\Big(\frac{xabtz}{q}\Big)^k\\
&\qquad\qquad\qquad\qquad\qquad\qquad\qquad\times {}_{3}\phi_1
\left(\begin{array}{c}
v/u,q/bs,q^{k+1}/abz\\
vq/b
\end{array};q,\frac{abusz}{q^{k+1}}\right).
\end{align*}
 \end{thm}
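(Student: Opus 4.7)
The plan is to reduce the triple sum on the left-hand side to a nested action of the two Cauchy companion operators $E(y/x,x;\theta)$ and $E(v/u,u;\theta)$ (acting on the independent parameters $a$ and $b$ respectively), and then evaluate each operator in turn via the convergent expansion \eqref{expansionE}. Using the operator identity \eqref{cauchyB} I would write
$U_{n+k}(x,y,a;q)=E(y/x,x;\theta)_a\{(-1)^{n+k}q^{n+k\choose 2}a^{n+k}\}$
and
$U_{m+k}(u,v,b;q)=E(v/u,u;\theta)_b\{(-1)^{m+k}q^{m+k\choose 2}b^{m+k}\}$,
substitute into the left-hand side, and pull both operators outside of the triple sum. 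Using the identity ${j+k\choose 2}={j\choose 2}+jk+{k\choose 2}$, the combined exponent of $q$ collapses to ${n\choose 2}+{m\choose 2}+{k\choose 2}$, the combined sign collapses to $(-1)^{n+m+k}$, and the monomial factors as $a^nb^m(ab)^k$. The three resulting one-variable sums are then evaluated by the standard $q$-exponential identity $\sum_j(-x)^jq^{j\choose 2}/(q;q)_j=(x;q)_\infty$, and the left-hand side becomes
\[
E(y/x,x;\theta)_a\,E(v/u,u;\theta)_b\bigl\{(at,bs,abz;q)_\infty\bigr\}.
\]

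Next I would evaluate this expression one operator at a time. Apply $E(y/x,x;\theta)_a$ first via \eqref{expansionE}, pulling the $a$-independent factor $(bs;q)_\infty$ outside. The shifted product $(atq^{-k},abzq^{-k};q)_\infty$ factors via the elementary identity $(cq^{-k};q)_\infty=(-c)^kq^{-{k\choose 2}-k}(q/c;q)_k(c;q)_\infty$, which peels off the finite piece $(q/at,q/abz;q)_k$ and leaves the surviving infinite product $(at,abz;q)_\infty$ outside the $k$-sum. After collapsing the residual factors $(a^2btz)^k(xq/a)^kq^{-{k\choose 2}-2k}$ against the $q^{k\choose 2}$ from \eqref{expansionE}, the $k$-dependent monomial becomes exactly the $(-1)^kq^{-{k\choose 2}}(xabtz/q)^k$ required by the statement. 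The inner factor $(bs;q)_\infty(abzq^{-k};q)_\infty$ still depends on $b$, so at this stage I would apply $E(v/u,u;\theta)_b$ term by term inside the $k$-sum, once again via \eqref{expansionE}. The same peeling identity produces the surviving prefactor $(vq/b,bs,abz;q)_\infty/(uq/b;q)_\infty$ together with a ${}_3\phi_1$ in which the outer index $k$ has migrated into the numerator parameter $q^{k+1}/abz$ and into the argument $abusz/q^{k+1}$, as stated.

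The main obstacle is bookkeeping rather than conceptual: the two nested applications of \eqref{expansionE} produce several competing powers of $q$ and of the variables, and the signs must be tracked carefully so that the outer $k$-sum acquires the precise factor $(-1)^kq^{-{k\choose 2}}(xabtz/q)^k$ while the inner ${}_3\phi_1$ acquires the $k$-dependent top parameter $q^{k+1}/abz$ and argument $abusz/q^{k+1}$. The convergence hypotheses $|xq/a|<1$ and $|uq/b|<1$ are precisely the conditions required for the two applications of \eqref{expansionE}, and no further restriction is introduced in the course of the calculation.
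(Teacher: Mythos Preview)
Your proposal is correct and follows exactly the route the paper intends. The paper omits the detailed proof of this theorem but says it uses ``similar arguments'' to Theorem~\ref{novelgf}, and your two-operator version of that argument---writing each compound-index factor via \eqref{cauchyB}, collapsing the triple sum to $(at,bs,abz;q)_\infty$, and then applying \eqref{expansionE} successively in $a$ and in $b$---is precisely the natural adaptation; your bookkeeping of signs and $q$-powers checks out and lands on the stated ${}_3\phi_1$.
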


 Setting $x,u\rightarrow 0, y,v\rightarrow 1$ , we obtain
 \begin{align*}
\sum_{n=0}^\infty & \sum_{m=0}^\infty \sum_{k=0}^\infty
(-1)^{n+m+k}q^{{n\choose 2}+{m\choose 2}+{k\choose 2}}
g_{n+k}(a|q)g_{m+k}(b|q)\frac{t^n}{(q;q)_n}\frac{s^m}{(q;q)_m}\frac{z^k}{(q;q)_k}\\
&=(q/a,q/b,bs,at,abz;q)_\infty\sum_{k=0}^\infty
\frac{(q/at,q/abz;q)_k}{(q,q/a;q)_k}\Big(\frac{abtz}{q}\Big)^k
{}_{2}\phi_1 \left(\begin{array}{c}
q/bs,q^{k+1}/abz\\
q/b
\end{array};q,\frac{absz}{q^{k+1}}\right).
\end{align*}
Because of the convergence requirement, we should assume that
$q/at=q^{-r}$ and $|absz/q^{r+1}|<1$. Under this condition,  the
${}_2\phi_1$ series in the above expression can be summed by the
$q$-Gauss formula \cite[Appendix\,(II.8)]{gasrah}. It follows
that
\begin{align*}
\sum_{n=0}^\infty & \sum_{m=0}^\infty \sum_{k=0}^\infty
(-1)^{n+m+k}q^{{n\choose 2}+{m\choose 2}+{k\choose 2}}
g_{n+k}(a|q)g_{m+k}(b|q)\frac{t^n}{(q;q)_n}\frac{s^m}{(q;q)_m}\frac{z^k}{(q;q)_k}\\
&=
\frac{(q/a,s,at,az,bs,abz;q)_\infty}{(absz/q;q)_\infty}{}_{3}\phi_2
\left(\begin{array}{c}
q^{-n},q/az,q/abz\\
q/a,q^2/absz
\end{array};q,\frac{atz}{s}\right).
\end{align*}
It should mentioned that the terminating condition $t/v=q^{-r}$
is overlooked in the operator identity of Zhang and Wang \cite[Theorem\,2.5]{zhangwang05}, and the same condition is required
concerning the identity of Cao  \cite[Eq.\,(2.9)]{cao}.

 \begin{thm} Assume $\max\{|xq/a|,|vq/b|\}<1$. We have
 \begin{align*}
\sum_{k=0}^\infty & (-1)^{m+n+k}q^{-{n+k\choose 2}-{m\choose 2}-mk}
U_{n+k}(x,y,a;q)U_{m+k}(u,v,b;q)\frac{z^k}{(q;q)_k}\\
&=\frac{(yq/a,vq/b,abz;q)_\infty}{(xq/a,uq/b;q)_\infty} a^n b^m
\sum_{k=0}^\infty \frac{(y/x,q/abz;q)_k}{(q,yq/a;q)_k}
\Big(\frac{xbz}{q^n}\Big)^k {}_{2}\phi_1 \left(\begin{array}{c}
v/u,q^{k+1}/abz\\
vq/b
\end{array};q,\frac{auz}{q^{m+k}}\right).
\end{align*}
 \end{thm}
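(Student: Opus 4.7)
The plan is to follow the same operator scheme used in the preceding theorems: pull the Al-Salam-Carlitz polynomials inside the scope of the Cauchy companion operators via \eqref{cauchyB}, collapse the remaining single sum into a $q$-exponential, and then evaluate the resulting double operator action explicitly using the convergent expansion \eqref{expansionE}.

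First I would apply \eqref{cauchyB} to rewrite $U_{n+k}(x,y,a;q) = E(y/x,x;\theta_a)\{(-1)^{n+k}q^{\binom{n+k}{2}}a^{n+k}\}$ and $U_{m+k}(u,v,b;q) = E(v/u,u;\theta_b)\{(-1)^{m+k}q^{\binom{m+k}{2}}b^{m+k}\}$, where the subscripts indicate which parameter the operator acts on; these two operators commute. Pulling them outside the $k$-sum, the exponents collapse via the identity $\binom{m+k}{2}-\binom{m}{2}-mk=\binom{k}{2}$, so the sum over $k$ becomes $a^n b^m \sum_{k\ge 0}(-1)^k q^{\binom{k}{2}}(abz)^k/(q;q)_k = a^nb^m(abz;q)_\infty$. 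Thus the whole left-hand side equals
\[
L \;=\; E(y/x,x;\theta_a)\,E(v/u,u;\theta_b)\bigl\{a^n b^m (abz;q)_\infty\bigr\}.
\]

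Next I would evaluate $L$ by applying $E(y/x,x;\theta_a)$ first (since $b^m$ is inert under it), using \eqref{expansionE} with $c=a$ and $f(a)=a^n(abz;q)_\infty$. The outcome of this step is best expressed in closed form through the shift identity
\[
(abzq^{-k};q)_\infty \;=\; (abz;q)_\infty\,(-abz)^k q^{-\binom{k+1}{2}}(q/(abz);q)_k,
\]
which converts the $\theta_a$-shift $a\mapsto aq^{-k}$ into a clean $(abz;q)_\infty\,(q/(abz);q)_k$ factor. The result is the outer sum $\sum_k (y/x,q/(abz);q)_k/((q,yq/a;q)_k)\,(xbz/q^n)^k$, multiplied by $a^n(yq/a,abz;q)_\infty/(xq/a;q)_\infty$. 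Then I would apply $E(v/u,u;\theta_b)$ term-by-term. After pulling out $b$-independent pieces, each term requires evaluating $E(v/u,u;\theta_b)\{b^{m+k}(abz;q)_\infty(q/(abz);q)_k\}$; using the same shift identity in reverse turns this into $E(v/u,u;\theta_b)\{b^m(abzq^{-k};q)_\infty\}$ up to a prefactor. A second application of \eqref{expansionE}, now with $c=b$, produces precisely the inner ${}_2\phi_1$ with top parameters $v/u,q^{k+1}/abz$, bottom parameter $vq/b$, and argument $auz/q^{m+k}$.

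Putting everything together, the multiplicative prefactors gather into $(yq/a,vq/b,abz;q)_\infty/((xq/a,uq/b;q)_\infty)\cdot a^n b^m$ and the outer sum over $k$ matches the RHS of the theorem. The main obstacle is the bookkeeping of the shift factors coming from the two $\theta$ operators: each application of $\theta$ produces a $q^{-k}$-shift in $abz$, and the clean appearance of $(q/(abz);q)_k$ in the outer sum together with $(q^{k+1}/(abz);q)_j$ in the inner ${}_2\phi_1$ only emerges after using the shift identity both to introduce and to remove the factor $(abzq^{-k};q)_\infty$. Once this careful bookkeeping is carried out, the convergence condition $\max\{|xq/a|,|vq/b|\}<1$ (needed for both applications of \eqref{expansionE}) is the only restriction.
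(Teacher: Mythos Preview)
Your proposal is correct and is exactly the scheme the paper intends (the paper omits the proof, pointing to ``similar arguments'' as in Theorem~\ref{novelgf}): represent each Al-Salam--Carlitz factor via \eqref{cauchyB}, collapse the $k$-sum to $a^nb^m(abz;q)_\infty$, and evaluate the commuting pair of operators by two successive applications of \eqref{expansionE}, using the shift identity for $(abzq^{-k};q)_\infty$ to produce the factors $(q/abz;q)_k$ and $(q^{k+1}/abz;q)_j$.  One small bookkeeping remark: the second application of \eqref{expansionE} to $E(v/u,u;\theta_b)$ actually requires $|uq/b|<1$ (not $|vq/b|<1$), so the convergence hypothesis you quote---which matches the paper's statement---appears to be a typo there as well; compare with the hypothesis $|uq/b|<1$ in the immediately preceding theorem.
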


 Setting $x,u\rightarrow 0, y,v\rightarrow 1$, we deduce  that
  \begin{align*}
\sum_{k=0}^\infty & (-1)^{k}q^{{k\choose 2}}
g_{n+k}(a|q)g_{m+k}(b|q)\frac{z^k}{(q;q)_k}\\
&=(q/a,q/b,abz;q)_\infty a^n b^m \sum_{k=0}^\infty
\frac{(-1)^kq^{k\choose 2}(q/abz;q)_k}{(q,q/a;q)_k}
\Big(\frac{bz}{q^n}\Big)^k {}_{1}\phi_1 \left(\begin{array}{c}
q^{k+1}/abz\\
q/b
\end{array};q,\frac{az}{q^{m+k}}\right),
\end{align*}
which can be deduced from the formula of Cao
\cite[Theorem\,4.3]{cao} by three transformations, namely,  the the
limiting case of \cite[Appendix\,(III.2)]{gasrah} when $c\rightarrow
0$, the two transformations \cite[Appendix\,(III.2)]{gasrah}  and
 \cite[Appendix\,(III.7)]{gasrah}.

\vspace{.2cm} \noindent{\bf Acknowledgments.} This work was
supported by the 973 Project, the PCSIRT Project of the Ministry of
Education,  and the National
Science Foundation of China.

\end{document}